\documentclass[12pt,nosumlimits,nonamelimits]{amsart}
\usepackage{mathtools}

\topmargin 2mm
\evensidemargin 1mm
\oddsidemargin 1mm
\textwidth 160mm
\textheight 210mm

\newtheorem{thmA}{Theorem}

\newtheorem*{corA}{Corollary}
\newtheorem{thm}{Theorem}
\newtheorem{prop}[thm]{Proposition}
\newtheorem{lem}[thm]{Lemma}
\theoremstyle{remark}
\newtheorem{rem}[thm]{Remark}
\hyphenation{con-tractibil-i-ty}

\newcommand{\IC}{\mathbb{C}}
\newcommand{\IM}{\mathbb{M}}
\newcommand{\IN}{\mathbb{N}}
\newcommand{\IT}{\mathbb{T}}
\newcommand{\IZ}{\mathbb{Z}}

\newcommand{\cH}{\mathcal{H}}
\newcommand{\cK}{\mathcal{K}}
\newcommand{\cL}{\mathcal{L}}

\newcommand{\cQ}{\mathcal{Q}}
\newcommand{\cR}{\mathcal{R}}
\newcommand{\cS}{\mathcal{S}}
\newcommand{\cU}{\mathcal{U}}

\newcommand{\ve}{\varepsilon}
\newcommand{\vp}{\varphi}
\newcommand{\vs}{\varsigma}

\newcommand{\id}{\mathrm{id}}
\DeclareMathOperator{\Aut}{Aut}
\DeclareMathOperator{\Int}{\smash{\overline{\mathrm{Int}}}}
\DeclareMathOperator{\Out}{Out}
\DeclareMathOperator{\Ad}{Ad}
\newcommand{\ns}{\mathop{\mathcal{S}_{\mathrm{n}}}}
\DeclareMathOperator{\Der}{Der}
\DeclareMathOperator{\ran}{ran}
\DeclareMathOperator{\supp}{supp}
\DeclareMathOperator{\dist}{d}
\DeclareMathOperator{\Ball}{Ball}

\DeclareMathOperator{\UCP}{UCP}
\DeclareMathOperator{\End}{End}
\DeclareMathOperator{\CAR}{CAR}
\newcommand{\ip}[1]{\mathopen{\langle}#1\mathclose{\rangle}}

\title[Contractibility of the automorphism group]{Contractibility 
of the automorphism group \\ of a von Neumann algebra}
\author{Narutaka Ozawa}
\address{RIMS, Kyoto University, \mbox{606-8502} Japan}
\email{narutaka@kurims.kyoto-u.ac.jp}
\thanks{The author was partially supported by JSPS KAKENHI Grant Numbers 24K00527 and 20H00114}
\subjclass{Primary 58B05; Secondary 46L10}

\keywords{von Neumann algebras, Automorphism groups, contractible}
\date{\today}

\begin{document}
\begin{abstract}
We prove that the approximately inner automorphism group 
of a separable strongly stable von Neumann algebra 
is contractible in the u-topo\-logy. 
Thus the automorphism group of the hyperfinite type $\mathrm{III}_1$ 
factor is contractible. 
\end{abstract}
\maketitle
\section{Introduction}
In this paper, we continue Popa and Takesaki's 
study (\cite{pt}) of contractibility of the unitary group and 
the automorphism group of a von Neumann algebra. 

Let $M$ be a von Neumann algebra and $\Aut(M)$ denote 
the group of $*$-auto\-mor\-phisms on $M$, 
equipped with the u-topo\-logy, i.e., the pointwise 
convergence topology on the predual $M_*$. 
The u-topo\-logy makes $\Aut(M)$ a topological group, 
which is Polish if $M$ has separable predual. 
We are interested in the closed normal 
subgroup $\Int(M)\subset\Aut(M)$ 
of \emph{approximately inner} automorphisms, 
which is the closure of the subgroup 
$\mathrm{Int}(M)$ of inner automorphisms. 
The von Neumann algebra $M$ is said to 
be \emph{strongly stable} (or \emph{McDuff}) 
if $M\cong M\otimes\cR$, 
where $\cR$ denotes the hyperfinite (or AFD) factor 
of type $\mathrm{II}_1$ with separable predual. 
The following theorem generalizes Popa 
and Takesaki's theorem (\cite{pt}). 

\begin{thmA}\label{thmA}
Let $M$ be a strongly stable von Neumann algebra 
with separable predual. 
Then $\Int(M)$ is contractible. 
\end{thmA}

The case for the hyperfinite factor $\cR_{\mathrm{III}_1}$ 
of type $\mathrm{III}_1$ with separable predual 
has attracted considerable attention because 
of applications to geometric topology and 
mathematical physics (see e.g., \cite{st}). 
The factor $\cR_{\mathrm{III}_1}$ is strongly stable 
(and so are all separable hyperfinite type $\mathrm{II}$ 
and type $\mathrm{III}$ von Neumann algebras, 
see \cite{takesakiIII}). 
That $\Aut(\cR_{\mathrm{III}_1})=\Int(\cR_{\mathrm{III}_1})$ 
is proved by Kawahigashi, Sutherland, and Takesaki (\cite{kst}).

\begin{corA}\label{corB}
The automorphism group $\Aut(\cR_{\mathrm{III}_1})$ 
of $\cR_{\mathrm{III}_1}$ is contractible. 
\end{corA}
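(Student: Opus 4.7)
The plan is to derive Corollary~B as an immediate formal consequence of Theorem~A together with the two facts already recorded in the introduction. Specifically, I would combine (i) the result from \cite{takesakiIII} that $\cR_{\mathrm{III}_1}$ is strongly stable (it of course has separable predual by hypothesis), and (ii) the Kawahigashi--Sutherland--Takesaki identification $\Aut(\cR_{\mathrm{III}_1}) = \Int(\cR_{\mathrm{III}_1})$ from \cite{kst}.

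First I would invoke Theorem~A with $M = \cR_{\mathrm{III}_1}$; since the hypotheses are satisfied by (i), this yields that $\Int(\cR_{\mathrm{III}_1})$ is contractible in the u-topology. Then, using (ii), I would transport this contractibility to the full group $\Aut(\cR_{\mathrm{III}_1})$. Because $\Int(M)$ is defined as a closed subgroup of $\Aut(M)$ carrying the induced u-topology, the set-theoretic identification in (ii) is automatically an identification of topological groups, so the contractibility passes over without further work.

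There is essentially no obstacle here: all the mathematical content is concentrated in Theorem~A and in the deep input \cite{kst}, both of which are available to us. Accordingly, the proof I would write reduces to a two-line deduction pointing to (i), Theorem~A, and then (ii).
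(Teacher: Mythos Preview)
Your proposal is correct and matches the paper's approach exactly: the paper does not give a separate proof of the Corollary but simply records in the introduction that $\cR_{\mathrm{III}_1}$ is strongly stable (citing \cite{takesakiIII}) and that $\Aut(\cR_{\mathrm{III}_1})=\Int(\cR_{\mathrm{III}_1})$ (citing \cite{kst}), making the Corollary an immediate consequence of Theorem~A.
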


We denote by $\cU(M)$ the unitary group of $M$ 
equipped with the ultrastrong topology. 
Let $N\subset M$ be an inclusion of von Neumann algebras 
(which is always assumed to be unital). 
The unitary group $\cU(N)$ of $N$ will be identified 
with its image in $\cU(M)$, as they are canonically 
homeomorphically isomorphic. 
The following is asserted in \cite{pt}, 
but the proof contains a gap, as explained in Section~\ref{sec:thmc}. 
We bridge the gap on this occasion. 

\begin{thmA}\label{thmB}
Let $N\subset M$ be an inclusion of $\sigma$-finite 
von Neumann algebras and assume 
that $N$ is strongly stable. 
Then the quotient map from $\mathcal{U}(M)$ 
onto $\mathcal{U}(M)/\mathcal{U}(N)$ 
admits a continuous cross section. 
Equivalently, there is a continuous equivariant retraction 
from $\cU(M)$ onto $\cU(N)$.
\end{thmA}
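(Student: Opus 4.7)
\noindent\emph{Proof proposal.} The plan is to construct the retraction $r\colon\cU(M)\to\cU(N)$ by an infinite tensor product absorption argument built on the strong stability of $N$. Iterating $N\cong N\otimes\cR$, fix an identification $N\cong P\otimes\bigotimes_{k\ge 1}\cR_k$ with $P\cong N$ and each $\cR_k\cong\cR$; set $S_n:=\bigotimes_{k\le n}\cR_k$ and $T_n:=S_n'\cap N\cong N$, so that $N=S_n\otimes T_n$ and each $T_n$ remains strongly stable. Inside each $\cR_k$ also fix a nested chain of matrix-algebra subfactors with explicit matrix units.

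For each $n$, construct an ultrastrongly continuous $r_n\colon\cU(M)\to\cU(N)$ by an explicit formula that uses the matrix units of $S_n$ together with a Cuntz-like family drawn from a copy of $\cR$ inside $T_n$. Intuitively, $r_n(u)$ should record how $u$ acts against the first $n$ tensor factors of $N$, then be packaged into a unitary of $N$ by means of the freshly chosen tensor factor inside $T_n$. The maps $r_n$ should be designed so that $r_n(uv)=r_n(u)v$ for every $v\in\cU(N)$, so that $r_n(u)=u$ whenever $u\in\cU(N)$, and so that the sequence $(r_n(u))_n$ is ultrastrongly Cauchy, uniformly for $u$ ranging over ultrastrong neighborhoods of each base point; the retraction is then $r(u):=\lim_n r_n(u)$.

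The main obstacle, and precisely the gap in Popa--Takesaki's original argument, is securing the uniform Cauchy estimate. Their inductive construction delivers only pointwise convergence in $u$, so the limit map need not be ultrastrongly continuous. My plan for bridging the gap is to insert at each stage a telescoping correction drawn from a fresh commuting copy of $\cR$ inside the residual $T_n$, chosen to cancel the leading discrepancy between $r_{n+1}(u)$ and $r_n(u)$; the mutual commutation of these fresh copies across stages lets the cumulative errors be summed in a fixed faithful-state seminorm, yielding a geometric-type uniform bound. The strong stability of $N$ enters essentially here, since it guarantees an inexhaustible supply of such commuting copies of $\cR$ inside every $T_n$; without it the supply would run out and the uniform estimate, hence the continuity of $r$, would fail.
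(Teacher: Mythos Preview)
Your proposal has a genuine gap at precisely the point you identify as the crux. You acknowledge that the obstacle is securing a \emph{uniform} Cauchy estimate for $(r_n(u))_n$, and you propose to fix it by inserting ``telescoping corrections'' drawn from fresh commuting copies of $\cR$. But this is not an argument: you have not explained how such a correction can be chosen \emph{continuously in $u$} while simultaneously making $\|r_{n+1}(u)-r_n(u)\|$ small \emph{uniformly in $u$}. Having an inexhaustible supply of commuting copies of $\cR$ is necessary but nowhere near sufficient; the difficulty is a selection problem, and mutual commutation alone does not solve it. Your $r_n$ themselves are described only as ``an explicit formula that uses the matrix units of $S_n$ together with a Cuntz-like family'', which is too vague to support any estimate. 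In effect you have restated the Popa--Takesaki gap and asserted that it can be closed, without a mechanism.

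The paper takes a different route. Rather than building retractions on $\cU(M)$ directly and hoping their limit is continuous, it works on the quotient $X=\cU(N)\backslash\cU(M)$ and runs the Michael Selection Principle: one constructs approximate cross sections $\varsigma_n\colon X\to\cU(M)$ with $\dist(\varsigma_n(x),Q^{-1}(x))<\delta_n\to 0$ and $\dist(\varsigma_{n-1}(x),\varsigma_n(x))<\delta_{n-1}+\delta_n$, so that $\varsigma=\lim_n\varsigma_n$ is automatically a continuous section. The inductive step uses a partition of unity $\{\psi_i\}$ on $X$ together with a convex-combination trick for unitaries: given candidate unitaries $u_i$ (one per chart), one sets $\varsigma_n(x)=\sum_i\theta_{t(x)}(u_i)\,\phi_{t(x)}(p_i(x))$, where the $p_i(x)\in L^\infty[0,1]$ are orthogonal projections of trace $\psi_i(x)$, $\phi_t$ embeds $L^\infty[0,1]$ into $\theta_t(\cR)'\cap\theta_{t/2}(\cR)$, and $(\theta_t)_{t\ge0}$ is a continuous endomorphism semigroup on $\cR$ (built via a CAR-algebra model) with $\theta_0=\id$ and diffuse relative commutant for $t>0$. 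This convexity device is what replaces the failed ``geodesic structure'' of Popa--Takesaki and delivers the uniform control your sketch is missing. The paper also first reduces from the hypothesis ``$N$ strongly stable'' to the case where the entire inclusion $N\subset M$ is strongly stable, by tensoring with an auxiliary copy of $\cR$ and composing two equivariant retractions; your outline does not address this reduction.
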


\subsection*{Acknowledgment} 
The author is grateful for Yusuke Isono for explaining 
him the contents of \cite{pt} and for pointing out an error 
in an earlier draft, for Masaki Izumi for drawing his attention 
to \cite{dp}, and for the referee for helpful suggestion concerning 
the exposition.
\section{Preliminary for Proof of Theorem~\ref{thmA}}\label{sec:prelim}
Let $M$ be a von Neumann algebra with a faithful normal 
state $\omega$. 
The corresponding $2$-norm on $M$ is given by 
$\|x\|_2\coloneqq\omega(x^*x)^{1/2}$. 
The topology induced by the $2$-norm coincides 
with the ultrastrong topology on bounded subsets of $M$. 
It also coincides with the ultraweak topology on 
the unitary group $\cU(M)$. 
For $a\in M$, we define $a\omega\in M_*$ by 
$(a\omega)(x)\coloneqq\omega(xa)$. 
One has $\|a\omega\|\le\|a\|_2$ 
by the Cauchy--Schwarz inequality.

Let $\UCP_\sigma(M,N)$ denote the set of ultraweakly 
continuous unital completely positive maps from a von Neumann 
algebra $M$ into $N$. Note that $\Aut(M)\subset\UCP_\sigma(M,M)$. 
Let $\ns(N)$ denote the set of normal states on $N$. 
For $\alpha\in\UCP_\sigma(M,N)$, we define its pre-conjugate 
$\alpha^*\colon N_*\to M_*$ by 
$\alpha^*(\vp)=\vp\circ\alpha$. 
Note that $(\alpha\beta)^*=\beta^*\alpha^*$.
We extend the definition of the u-topo\-logy on $\Aut(M)$ and 
define the \emph{u-topo\-logy} on $\UCP_\sigma(M,N)$ to be 
the pointwise norm convergence topology of the pre-conjugates. 
Namely, the u-topo\-logy is induced by the pseudo-metrics 
$\{ \dist_\vp : \vp\in \ns(N)\}$, where 
\[
\dist_\vp(\alpha,\beta) \coloneqq \|(\alpha^*-\beta^*)(\vp)\|
 =\|\vp\circ\alpha-\vp\circ\beta\|. 
\]
We also write $\dist_F\coloneqq\max_{\vp\in F}\dist_\vp$ 
for a finite subset $F\subset \ns(N)$. 
We note that the composition is jointly u-con\-ti\-nuous 
(because all pre-conjugates are norm contractive) 
and that the u-topo\-logy is compatible with the tensor product of von Neumann 
algebras (because the algebraic tensor product $(M_1)_*\otimes (M_2)_*$ is 
dense in $(M_1\otimes M_2)_*$). 
Moreover, the pseudo-metric $\dist_\vp$ is right invariant on $\Aut(M)$, i.e., 
$\dist_\vp(\alpha,\beta)=\dist_\vp(\id,\beta\alpha^{-1})$ 
for every $\alpha,\beta\in\Aut(M)$, and the homomorphism 
$\cU(M)\ni u\mapsto\Ad u\in \Aut(M)$ is continuous. 
Here $\Ad u$ is the inner automorphism 
defined by $(\Ad u)(x)=uxu^*$. 

We consider a type $\mathrm{II}_1$ factor $N$ 
and denote by $\End(N)$ the set 
of unital $*$-endo\-mor\-phisms on $N$. 
The unique tracial state 
is denoted by $\tau$, or $\tau_N$ to emphasize $N$. 
We recall that the \emph{p-topo\-logy} on $\End(N)$ is 
the pointwise ultraweak convergence topology. 
It coincides with the pointwise 2-norm convergence topology, 
where the 2-norm is taken w.r.t.\ the trace $\tau$.  
Every $\psi\in\End(N)$ is trace-preserving and thus 
isometric w.r.t.\ the 2-norm. 
The p-topo\-logy is weaker than the u-topo\-logy, 
but they coincide on $\Aut(N)$. 

\begin{lem}\label{lem:p}
Let $N$ be a type $\mathrm{II}_1$ factor. 
Suppose that $\psi_n\in\Aut(N)$ converge 
to $\psi\in\End(N)$ in the p-topo\-logy, 
and denote by $E$ the unique trace-preserving conditional 
expectation from $N$ onto $\psi(N)$. 
Then, $E\psi_n\to\psi$ and $\psi_n^{-1}\to\psi^{-1}E$ in the u-topo\-logy. 
\end{lem}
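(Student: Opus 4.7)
The plan is to rewrite each u-topology distance as an $L^1(N,\tau)$-norm of a difference of Radon-Nikodym densities with respect to $\tau$, and then reduce that $L^1$-convergence to the $L^2$-convergence supplied by the p-topology hypothesis.

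I would start by writing each $\vp \in \ns(N)$ as $\vp = h\tau$ with $h \in L^1(N,\tau)_+$; the predual norm on $N_*$ then coincides with $\|\cdot\|_1$ on densities. Since every $\alpha \in \End(N)$ is $\tau$-preserving, it extends to an isometry on $L^p(N,\tau)$ for $1 \le p \le \infty$, and for $\alpha \in \Aut(N)$ the trace identity yields $\vp \circ \alpha = \alpha^{-1}(h)\tau$. Using $\psi(N)$-bimodularity of $E$ together with $\tau \circ E = \tau$, a short calculation should give, for every $x \in N$,
\[ \vp(E\psi_n(x)) = \tau\bigl(\psi_n^{-1}(E(h))\,x\bigr), \qquad \vp(\psi(x)) = \tau\bigl(\psi^{-1}(E(h))\,x\bigr), \]
so $\dist_\vp(E\psi_n, \psi) = \|\psi_n^{-1}(k) - \psi^{-1}(k)\|_1$ with $k := E(h) \in L^1(\psi(N),\tau)$. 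A parallel computation, starting from $\tau(h\,\psi^{-1}E(x)) = \tau(\psi(h)\,x)$ (which follows by applying $\psi$ to $h\,\psi^{-1}E(x)$ and then using bimodularity of $E$), should yield
\[ \dist_\vp(\psi_n^{-1}, \psi^{-1}E) = \|\psi_n(h) - \psi(h)\|_1. \]

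To finish, I would show that both right-hand sides tend to zero. The base case is when the density lies in $N$: the p-topology hypothesis gives $\|\psi_n(h) - \psi(h)\|_2 \to 0$, and Cauchy-Schwarz (valid since $\tau$ is a state, so $\|\cdot\|_1 \le \|\cdot\|_2$) promotes this to $L^1$-convergence. For the first identity I would additionally invoke the $L^1$-isometry of $\psi_n^{-1}$ to rewrite $\|\psi_n^{-1}(k) - \psi^{-1}(k)\|_1 = \|\psi_n(k') - \psi(k')\|_1$ with $k = \psi(k')$, $k' \in N$, reducing to the base case. General $L^1$-densities are then handled by $L^1$-approximation from $N$ (respectively $\psi(N)$), with the $L^1$-isometry of $\psi_n, \psi_n^{-1}, \psi$ absorbing the error. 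The only non-routine ingredient is the bimodularity calculation that expresses $\dist_\vp$ as an $L^1$-difference of densities; once those identities are in place, the rest is a standard $\ve/3$-argument and I foresee no serious obstacle.
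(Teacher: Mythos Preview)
Your proposal is correct and follows essentially the same route as the paper: both identify $(E\psi_n)^*(h\tau)$ with $\psi_n^{-1}(E(h))\tau$ (and analogously for the other maps), then apply the trace-preserving isometry $\psi_n$ to rewrite $\|\psi_n^{-1}(k)-\psi^{-1}(k)\|$ as $\|\psi_n(k')-\psi(k')\|$ with $k'=\psi^{-1}(k)\in N$, where the p-topology hypothesis applies directly. The only difference is cosmetic---the paper restricts at the outset to bounded densities $a\in N_+$ with $\tau(a)=1$, works in the $2$-norm, and invokes norm-density of $\{a\tau\}$ in $\ns(N)$ at the end, whereas you phrase everything in $L^1$ and handle general densities by an $\ve/3$-approximation; the underlying computation is identical.
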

\begin{proof}
Let $a\in N$ be such that $a\geq0$ and $\tau(a)=1$. 
Then since 
\[
\| (\psi^{-1}E)(a) - (\psi_n^{-1}E)(a) \|_2 = \| \psi_n\psi^{-1} (E(a)) - E(a) \|_2\to0,
\]
one has 
\[
(E\psi_n)^*(a\tau)=(\psi_n^{-1}E)(a)\tau \to (\psi^{-1}E)(a)\tau=(E\psi)^*(a\tau)
\]
in norm. 
Since the states of the form $a\tau$ are norm dense in $\ns(N)$, 
the first assertion follows. The proof of the second is similar. 
\end{proof}

We collect a few well-known facts about 
the hyperfinite type $\mathrm{II}_1$ factor $\cR$. 
See \cite{takesakiIII} for general information.
We set $\cR_\infty\coloneqq\bigotimes_{n=1}^\infty\cR$ and 
write $\cR_n\coloneqq\bigotimes_{m=1}^n\cR$ the first $n$ tensor product. 
Hence for $\cR_n^{\mathrm{c}}\coloneqq\bigotimes_{m=n+1}^\infty\cR$, 
one has $\cR_\infty=\cR_n\otimes\cR_n^{\mathrm{c}}$.
Note that $\cR_n\cong\cR$ for every $n\in\IN\cup\{\infty\}$. 
The factor $\cR$ is 
\emph{strongly self-absorbing} in the following sense. 

\begin{lem}\label{lem:ssa}
Let's write $\cR=\cQ\otimes\cS$, 
where $\cQ$ and $\cS$ are copies of $\cR$.  
Then there are $*$-iso\-mor\-phisms 
$\rho_{\cQ} \colon\cQ\to\cR$ and $\rho_{\cS}\colon \cS\to\cR$ 
and a p-con\-ti\-nuous map 
$\psi\colon[0,1]\to\UCP_\sigma(\cR,\cR\otimes\cR)$ 
such that $\psi_0(x)=x\otimes 1$ for $x\in\cR$, 
$\psi_t$ are surjective $*$-iso\-mor\-phisms for $t>0$, 
and $\psi_1(x\otimes y)=\rho_{\cQ}(x)\otimes\rho_{\cS}(y)$ 
for $x\in\cQ$ and $y\in\cS$. 
The map $\psi$ satisfies 
$(\id_{\cR}\times\tau_{\cR})\psi_t\to\id_{\cR}$ 
and $\psi_t^{-1}\to\id_{\cR}\times\tau_{\cR}$ 
as $t\to0$ in the u-topo\-logy, 
where $\id_{\cR}\times\tau_{\cR}$ is the slice map 
given by $(\id_{\cR}\times\tau_{\cR})(x\otimes y)=\tau(y)x$. 
\end{lem}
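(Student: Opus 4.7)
The proof splits into two tasks: constructing a p-continuous family $\{\psi_t\}_{t \in [0,1]}$ with the prescribed endpoint conditions and surjectivity for $t > 0$, and deducing the two u-topology limits from p-continuity. The construction is the main obstacle; the limits then follow by a direct computation modelled on the proof of Lemma~\ref{lem:p}.

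For the construction, model $\cR$ as $\bigotimes_{n=1}^\infty M_2^{(n)}$ with $\cR_1$, $\cR_2$ corresponding to the odd- and even-indexed factors, and give the two target copies of $\cR$ in $\cR \otimes \cR$ compatible infinite tensor decompositions $\bigotimes_m M_2^{(m,i)}$ for $i = 1, 2$, so that $\psi_1$ becomes the natural ``odd-to-first-target, even-to-second-target'' factor map. For each $N \ge 0$, construct an isomorphism $\psi^{(N)} \colon \cR \to \cR \otimes \cR$ satisfying $\psi^{(N)}(a) = a \otimes 1$ for $a \in M_{2^N} := \bigotimes_{n \le N} M_2^{(n)}$, by decomposing $\cR = M_{2^N} \otimes \cR^{>N}$ and setting $\psi^{(N)} = \id_{M_{2^N}} \otimes \eta_N$ for a suitable isomorphism $\eta_N \colon \cR^{>N} \to \cR^{>N} \otimes \cR$ (the first target's tail tensored with the full second target). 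Connect $\psi^{(N)}$ to $\psi^{(N+1)}$ by a p-continuous path: choosing the $\eta_N$'s consistently, the two isomorphisms differ by the swap of two $M_2$-subfactors of $\cR \otimes \cR$, namely $M_2^{(N+1,1)}$ at first-target position $N+1$ and the slot where $\eta_N$ placed the source factor $M_2^{(N+1)}$, and this swap is implemented by the flip unitary on the $M_2 \otimes M_2 = M_4$ subalgebra they generate. Interpolate via a path of unitaries in $\cU(M_4)$ from $1$ to the flip, concatenate the interpolations on the intervals $[1/(N+1), 1/N]$, and set $\psi_0(x) = x \otimes 1$ and $\psi_1 = \psi^{(0)}$. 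The main technical difficulty is arranging all of this coherently so that the concatenated path is p-continuous at each junction and in the limit $t \to 0$.

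For p-continuity at $t = 0$: if $a \in M_{2^M}$ and $t < 1/M$, the interpolating unitary $V_t$ acts only on a first-target slot at position $N+1 > M$ and some second-target slot, so it commutes with $a \otimes 1$ (which has $1$ in every slot outside first-target positions $\le M$); hence $\psi_t(a) = \psi^{(N)}(a) = a \otimes 1$ exactly. The $2$-norm density of $\bigcup_M M_{2^M}$ in $\cR$, together with the $2$-norm isometry of each $\psi_t$ and of $\psi_0$, yields $\psi_t \to \psi_0$ in the p-topology. For the u-topology limits, one computes for $a \in \cR$ with $a \ge 0$:
\[
\big((\id_\cR \times \tau_\cR)\psi_t\big)^*(a \tau_\cR) = \psi_t^{-1}(a \otimes 1)\, \tau_\cR, \qquad (\psi_t^{-1})^*(a \tau_\cR) = \psi_t(a)\, \tau_{\cR \otimes \cR}.
\]
Applying $\psi_t$ (a $2$-isometry) to the first difference gives $\|\psi_t^{-1}(a \otimes 1) - a\|_2 = \|(a \otimes 1) - \psi_t(a)\|_2 \to 0$, and the second difference is directly bounded by $\|\psi_t(a) - (a \otimes 1)\|_2 \to 0$. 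Since $a\tau$-states are norm-dense in $\ns(\cR)$ and $\ns(\cR \otimes \cR)$, both u-topology convergences follow.
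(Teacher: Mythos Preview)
Your approach is correct and essentially the same as the paper's: concatenate flip interpolations at successively deeper tensor levels so the path p-converges to $x\mapsto x\otimes1$, then deduce the u-limits via the $a\tau$-density argument, which is exactly Lemma~\ref{lem:p}. The paper differs only cosmetically, using whole copies of $\cR$ as the tensor factors (with the flip on $\cR\otimes\cR$, itself built from the $\IM_2$-flip) and a parameter $s\in[1,\infty)$ that is then reparametrized, in place of your $\IM_2$-swaps concatenated on the intervals $[1/(N{+}1),1/N]$.
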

\begin{proof}
For notational convenience, we replace $\cR=\cQ\otimes\cS$ 
with $\cR_\infty = \cR_1\otimes\cR_1^{\mathrm{c}}$. 
Moreover we work with 
$\UCP_\sigma(\cR_\infty,\cR\otimes\cR_\infty)$ 
and swap left with right 
and $t\colon 1\mapsto0$ with $s\colon1\mapsto\infty$. 
There is a u-continuous map 
$\sigma\colon [0,1]\to\Aut(\cR\otimes\cR)$ 
that connects $\sigma_0\coloneqq\id$ to the flip automorphism 
$\sigma_1$, given by $x\otimes y\mapsto y\otimes x$.
The map $\sigma$ can be constructed from that for 
the $2$-by-$2$ matrix algebra $\IM_2$ 
via the isomorphism $\cR\cong\bigotimes_{\IN}\IM_2$. 
We write $\sigma^{(n)}_t\in\Aut(\cR_\infty)$ 
the copy of $\sigma_t$ applied 
at the $\{n,n+1\}$-th tensor product component. 
For $n\in\IN$ and $t\in[0,1]$, 
we define the $*$-iso\-mor\-phism 
$\psi_{n+t}\colon \cR_\infty\to\cR\otimes\cR_\infty$ by 
$\psi_{n+t}=\psi_n\sigma^{(n)}_t$, where
\[
\psi_n(x_1\otimes x_2\otimes\cdots)
 \coloneqq x_n \otimes (x_1\otimes\cdots\otimes x_{n-1}\otimes x_{n+1}\otimes\cdots).
\]
One has $\psi_1(x\otimes y)=\rho(x) \otimes \rho'(y)$ for 
$x\in \cR_1$ and $y\in\cR_1^{\mathrm{c}}$, where 
$\rho\colon\cR_1\to\cR$ is the canonical identification and 
$\rho'\colon\cR_1^{\mathrm{c}}\to\cR_\infty$ 
is the shift isomorphism. 
The map $\psi$ is u-con\-ti\-nuous on $[1,\infty)$, and 
since $\bigcup_n \cR_n\otimes\IC1_{\cR_n^\mathrm{c}}\subset\cR_\infty$ 
is ultrastrongly dense, one has 
$\psi_s\to1\otimes\id_{\cR_\infty}$ as $s\to\infty$ 
in the p-topo\-logy 
(NB: but not in the u-topo\-logy).
That $\psi$ satisfies the last statement 
follows from Lemma~\ref{lem:p}, as 
the conditional expectation $E$ 
onto $\ran\psi_0 = \cR\otimes\IC1$ 
is $\id\otimes\tau$ and 
hence $\psi_0^{-1}E=\id\times\tau$. 
\end{proof}

\begin{lem}\label{lem:martingale}
The trace-preserving conditional expectations 
$E_n\coloneqq\id_{\cR_n}\otimes\tau_{\cR_n^{\mathrm{c}}}$ 
from $\cR_\infty$ onto $\cR_n\otimes\IC1_{\cR_n^{\mathrm{c}}}$ 
converge to $\id_{\cR_\infty}$ in the u-topo\-logy. 
\end{lem}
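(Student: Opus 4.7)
The plan is to unfold the definition of the u-topology and reduce the claim to an $L^2$-martingale convergence statement. Since each pre-conjugate $E_n^*$ is norm contractive on $(\cR_\infty)_*$, and since states of the form $a\tau$ with $a\in\cR_\infty$, $a\ge0$, $\tau(a)=1$ are norm dense in $\ns(\cR_\infty)$, an $\ve/3$ argument reduces the problem to showing
\[
\|(a\tau)\circ E_n - a\tau\|\to 0 \quad\text{for every } a\in\cR_\infty.
\]

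Next I would identify $(a\tau)\circ E_n$ explicitly. Because $E_n$ is trace-preserving and its own adjoint with respect to the $L^2$ inner product, one has $\tau(E_n(x)a)=\tau(xE_n(a))$ for all $x,a\in\cR_\infty$ (apply $E_n$ again and use the bimodule property). Hence $(a\tau)\circ E_n = E_n(a)\tau$, and the Cauchy--Schwarz-type bound $\|b\tau\|\le\|b\|_2$ recorded in Section~\ref{sec:prelim} gives
\[
\|(a\tau)\circ E_n - a\tau\|=\|(E_n(a)-a)\tau\|\le\|E_n(a)-a\|_2.
\]

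So the only remaining point is the $L^2$-convergence $\|E_n(a)-a\|_2\to0$. Viewed in $L^2(\cR_\infty,\tau)$, the operators $E_n$ are the orthogonal projections onto the closed subspaces $L^2(\cR_n\otimes\IC1_{\cR_n^{\mathrm{c}}})$. These subspaces are increasing, and their union is dense in $L^2(\cR_\infty,\tau)$ because $\bigcup_n\cR_n\otimes\IC1_{\cR_n^{\mathrm{c}}}$ is ultrastrongly---hence 2-norm---dense in $\cR_\infty$. The standard SOT-convergence of an increasing net of projections to the projection onto the closure of the union then yields $E_n(a)\to a$ in 2-norm for every $a\in L^2$, in particular for every $a\in\cR_\infty$.

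There is no serious obstacle here; the mild subtlety is the identity $(a\tau)\circ E_n=E_n(a)\tau$, which requires the symmetry of the trace-preserving conditional expectation. Once that is recorded, the estimate chains together and the density argument upgrades pointwise convergence on the $a\tau$ to convergence for all normal states, which is exactly convergence in the u-topology.
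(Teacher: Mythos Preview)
Your argument is correct and follows the same route as the paper: identify $E_n^*(a\tau)=E_n(a)\tau$, show this converges to $a\tau$, and conclude by norm density of such states together with contractivity of $E_n^*$. The only difference is cosmetic: the paper restricts to $a\in\bigcup_n\cR_n\otimes\IC1_{\cR_n^{\mathrm{c}}}$, where $E_n(a)=a$ eventually and the martingale step is trivial, whereas you allow general $a\in\cR_\infty$ and invoke the $L^2$-martingale convergence (which itself is proved by the same density).
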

\begin{proof}
One has $E_n^*(a\tau)=E_n(a)\tau\to a\tau$ for every 
$a\in \bigcup_n \cR_n\otimes\IC1_{\cR_n^{\mathrm{c}}}$ 
with $a\geq0$ and $\tau(a)=1$.
Since the states of the form $a\tau$ are dense 
in $\ns(\cR_\infty)$, we are done.
\end{proof}

A unitary element $u\in\cU(M)$ is said to be 
\emph{$\omega$-Haar} (or simply \emph{Haar}) if 
$\omega(u^n)=0$ for all $n\neq0$. 
Let $u=\int_\IT z\,dE_u(z)$ be the spectral resolution of $u$, 
where  $E_u$ is the spectral measure on $\IT\coloneqq\{ z : |z|=1\}$. 
Then $u$ is $\omega$-Haar if and only if the probability measure 
$\omega\circ E_u$ coincides with the Haar (Lebesgue) measure $\lambda$ on $\IT$. 
We denote by $\log$ the branch of 
the logarithm on $\IT$ that takes values in $[-i\pi,i\pi)$. 

\begin{lem}\label{lem:haar}
Let $M$ be a von Neumann algebra with 
a faithful normal state $\omega$ and 
$W\subset\cU(M)$ be a subset consisting entirely 
of $\omega$-Haar unitary elements. 
Then, 
\[
h\colon W\times[0,1] \ni(u,t) \mapsto \exp(t\log u)\in\cU(M)
\]
is continuous and satisfies $h(u,0)=1$ and $h(u,1)=u$ for $u\in W$.
Moreover for every $\ve>0$ there is $\delta>0$ 
(independent of $\omega$ as long as $u$ is $\omega$-Haar) 
that satisfies the following property: 
If $\dist_\omega(\id,\Ad u)<\delta$, 
then $\dist_\omega(\id,\Ad h(u,t))<\ve$. 
\end{lem}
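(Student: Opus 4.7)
The plan is to prove the continuity of $h$ and the quantitative estimate separately.

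For the continuity, note that $\log\colon \IT \to i[-\pi,\pi)$ is a bounded Borel function, discontinuous only at $z=-1$, so continuous functional calculus does not directly give continuity of $u\mapsto\log u$. The strategy is to approximate $\log$ in $L^2(\IT,d\lambda)$ by continuous functions $f_n\colon \IT\to\IC$ with $\|f_n\|_\infty\le\pi$. For any $u\in W$, the $\omega$-Haar property yields
\[
\|\log u - f_n(u)\|_{\omega,2}^2 = \int_\IT |\log-f_n|^2\,d\lambda \longrightarrow 0,
\]
uniformly in $u$. Since each $u\mapsto f_n(u)$ is ultrastrongly continuous by continuous functional calculus, and the ultrastrong topology coincides with the $\|\cdot\|_{\omega,2}$-topology on bounded sets (as $\omega$ is faithful), $u\mapsto\log u$ is ultrastrongly continuous on $W$. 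Because $\exp$ is Lipschitz in 2-norm on the $\pi$-ball of skew-adjoint elements (via its power series), the joint continuity of $h(u,t)=\exp(t\log u)$ follows, as do the values $h(u,0)=1$ and $h(u,1)=u$.

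For the quantitative estimate, I work in the standard form $(L^2(M,\omega),J,\xi_\omega,\mathcal{P})$. The automorphism $\Ad h(u,t)$ is implemented by the natural-cone-preserving unitary $V_t:=h(u,t)\,J\,h(u,t)\,J$ on $L^2$, so $V_t\xi_\omega$ is the positive-cone representative of $\omega\circ\Ad h(u,t)^{-1}$. Writing $A:=-i\log u\in M_{\mathrm{sa}}$ with $\|A\|\le\pi$, a short computation shows $\{V_t\}_{t\in\mathbb{R}}$ is a one-parameter unitary group with self-adjoint generator $H:=A-JAJ$, so $V_t=e^{itH}$ with $\|H\|\le 2\pi$. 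Powers--Størmer applied to the hypothesis gives
\[
\int|e^{i\lambda}-1|^2\,d\mu(\lambda) = \|V_1\xi_\omega-\xi_\omega\|^2 \le \dist_\omega(\id,\Ad u) < \delta,
\]
where $\mu$ is the spectral measure of $H$ at $\xi_\omega$, supported in $[-2\pi,2\pi]$. The goal is to bound the analogous integral with $e^{it\lambda}$ uniformly for $t\in[0,1]$, after which Powers--Størmer delivers $\dist_\omega(\id,\Ad h(u,t))\le 2\|V_t\xi_\omega-\xi_\omega\|$.

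The main obstacle is that $|e^{i\lambda}-1|$ vanishes at $\lambda=\pm 2\pi$, so the hypothesis gives no direct control over $\mu$-mass near the endpoints, while $|e^{it\lambda}-1|$ is generically nonzero there. The $\omega$-Haar assumption is exactly what is needed: both spectral distributions of the commuting operators $A$ and $JAJ$ with respect to $\xi_\omega$ are the Lebesgue probability measure on $[-\pi,\pi]$ (the first by the Haar hypothesis, the second because $J\xi_\omega=\xi_\omega$ and $J$ is anti-linear), so the joint distribution of $(A,JAJ)$ on $[-\pi,\pi]^2$ has Lebesgue marginals. Consequently, for $r\in(0,\pi]$, the event $\{A-JAJ\ge 2\pi-r\}$ forces $\{A\ge\pi-r\}$, giving $\mu([2\pi-r,2\pi])\le r/(2\pi)$ and symmetrically near $-2\pi$. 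Splitting $\int|e^{it\lambda}-1|^2\,d\mu$ over $\{|\lambda|\le\pi\}$ (using the pointwise bound $|e^{it\lambda}-1|\le|e^{i\lambda}-1|$, valid for $|\lambda|\le\pi$ and $t\in[0,1]$), over $\{\pi<|\lambda|<2\pi-r\}$ (where the hypothesis together with $|e^{i\lambda}-1|^2\ge 4\sin^2(r/2)$ bounds the $\mu$-mass), and over $\{|\lambda|>2\pi-r\}$ (using the marginal bound), yields $\|V_t\xi_\omega-\xi_\omega\|^2\le C(\delta+\delta/r^2+r)$ for an absolute $C$. Optimizing with $r:=\delta^{1/3}$ produces a bound $\le C'\delta^{1/3}$ uniform in $\omega$, $u$, and $t$, so $\delta$ can be taken to be a fixed power of $\varepsilon$.
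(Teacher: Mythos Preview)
Your continuity argument is essentially the same as the paper's: both approximate the discontinuous $\log$ by continuous functions on $\IT$ and use the $\omega$-Haar hypothesis to obtain uniform $L^2$-convergence over $W$, then push through the Lipschitz estimate for $\exp$.

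Your quantitative argument, however, takes a genuinely different route. The paper stays entirely inside $M$: it replaces $u$ by $f_\kappa(u)$, whose spectrum avoids a $\kappa$-neighborhood of $-1$, uses the Haar hypothesis to bound $\|u-f_\kappa(u)\|_2$ (hence $\|\omega\circ\Der f_\kappa(u)\|$), then invokes norm-continuity of $\log$ on the truncated spectrum to control $\|\omega\circ\Der\log f_\kappa(u)\|$ and finally $\|\omega\circ\Der h(u,t)\|$. You instead pass to the standard form, realize $\Ad h(u,t)$ via the one-parameter group $V_t=e^{itH}$ with $H=A-JAJ$ (using that $A\in M$ and $JAJ\in M'$ commute), and apply Powers--St{\o}rmer in both directions to reduce the problem to a spectral-measure estimate for $H$ at $\xi_\omega$. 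The Haar hypothesis enters for you as the statement that both marginals of $(A,JAJ)$ at $\xi_\omega$ are Lebesgue on $[-\pi,\pi]$, which caps the $\mu$-mass near $\pm 2\pi$ and neutralizes the vanishing of $|e^{i\lambda}-1|$ there. Both arguments are correct; the paper's avoids modular theory and Powers--St{\o}rmer altogether, while yours is more conceptual and delivers an explicit rate $\delta\sim\ve^6$, whereas the paper only asserts the existence of an ``explicitly calculable'' $\ve(\delta)$.
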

\begin{proof}
For $\kappa > 0$, consider the piecewise linear function 
$g_\kappa$ from $[-\pi,\pi]$ to $[-\pi+\kappa,\pi-\kappa]$ 
that linearly connects $g_\kappa(-\pi)=0$, 
$g_\kappa(-\pi+\kappa)=-\pi+\kappa$, 
$g_\kappa(\pi-\kappa)=\pi-\kappa$, 
and $g_\kappa(\pi)=0$.
Then 
$f_\kappa(\exp(i\theta))\coloneqq\exp( i g_\kappa(\theta))$, 
$\theta\in[-\pi,\pi]$, 
defines a continuous unitary function $f_\kappa$ on $\IT$. 
Since $\log$ is continuous on the range of $f_\kappa$, the map 
$h_\kappa(u,t) \coloneqq \exp(t\log f_\kappa(u))$ is continuous 
on $W\times[0,1]$. 
Since every $u\in W$ is $\omega$-Haar, one has 
\[
\| h(u,t) - h_\kappa(u,t) \|_2^2 
=\int_{\IT}|\exp( t \log z ) - \exp( t\log f_\kappa(z) )|^2\,d\lambda(z)
\le 4\frac{\kappa}{\pi}.
\]
It follows that $h_\kappa\to h$ uniformly as $\kappa\to0$ 
and so $h$ is continuous. 

For the second assertion, let $\ve>0$ be given.
For $a\in M$, we define $\Der a$ on $M$ 
by $(\Der a)(x)=ax-xa$. 
Note that $\Der$ is linear, $\|\omega\circ\Der a\|\le2\|a\|_2$ 
for normal $a$, 
and $\|\omega\circ\Der v\|=\dist_\omega(\id,\Ad v)$ 
for $v\in\cU(M)$. 
We take $\kappa\coloneqq\ve^2/2$ and 
consider $h_\kappa$ as above. 
Thus $\| h(u,t)-h_\kappa(u,t)\|_2<\ve$ for $u\in W$.
Take a polynomial approximation $p_\kappa\in\IZ[z,z^{-1},t]$ 
that satisfies
$| p_\kappa(z,t) - \exp(t\log f_\kappa(z))|<\ve$ 
on $\IT\times[0,1]$. 
Since $\|\omega\circ\Der u^n\|\le|n|\|\omega\circ\Der u\|$ 
for every $n\in\IZ$, there is $\delta>0$ 
such that $\|\omega\circ\Der u\|<\delta$ 
implies $\| \omega\circ\Der p_\kappa(u,t) \|<\ve$ for all $t\in[0,1]$. 
It follows that for $(u,t)\in W\times[0,1]$, that 
$\|\omega\circ\Der u\|<\delta$ implies 
\[
\|\omega\circ\Der h\|
\le 2\| h - h_\kappa\|_2
 + 2\|h_\kappa-p_\kappa \|_2
 + \| \omega\circ\Der p_\kappa \|
 < 5\ve, 
\]
where we omitted writing $(u,t)$. 
This proves the second assertion. 
\end{proof}

\section{Proof of Theorem~\ref{thmA}}
The strategy of the proof of Theorem~\ref{thmA} 
is similar to that for Theorem~4 
in \cite{pt}, but the cross section 
method is replaced with a plain convexity argument. 
Dadarlat and Pennig's trick provides a room 
for the convexity argument to work.  

\begin{prop}[cf.\ Theorem 2.3 in \cite{dp}]\label{prop:dp}
Let $M_0$ be a strongly stable von Neumann algebra, 
and $M\coloneqq M_0\otimes\cR$. 
Then there is a continuous map 
$H\colon\Aut(M) \times[0,1] \to \Aut(M)$ 
such that $H(\alpha,0)=\alpha$ and $H(\alpha,1)\in \Aut(M_0)\otimes\id_{\cR}$. 
Moreover, $H$ maps $\Int(M)\times[0,1]$ into $\Int(M)$.
\end{prop}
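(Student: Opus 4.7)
The plan is to deform $\alpha \in \Aut(M)$ by the conjugation formula
\[
H(\alpha, t) := \Theta_t^{-1} \circ (\alpha \otimes \id_\cR) \circ \Theta_t \qquad (t > 0), \qquad H(\alpha, 0) := \alpha,
\]
where $\Theta_t \colon M \to M \otimes \cR$ is a p-continuous family of $*$-embeddings, $*$-isomorphisms for $t > 0$. I need $\Theta_0(y) = y \otimes 1$ so that $(\alpha \otimes \id_\cR)$ preserves $\Theta_0(M) = M \otimes 1$ (making the definition consistent through $t = 0$), and $\Theta_1(1_{M_0} \otimes r) = 1_M \otimes r$ so that this copy of $\cR$ is fixed pointwise by $\alpha \otimes \id_\cR$. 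Strong stability of $M_0$ supplies the ``room'' for this construction: $M \otimes \cR \cong M$ canonically, and one can travel continuously along this absorption using the path from Lemma~\ref{lem:ssa}.

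To build $\Theta_t$, I would first lift $\psi_t$ to an $M_0$-analog by fixing a strong-stability isomorphism $\rho \colon M_0 \to M_0 \otimes \cR$ and setting
\[
\theta_t := (\rho^{-1} \otimes \id_\cR) \circ (\id_{M_0} \otimes \psi_t) \circ \rho \colon M_0 \to M_0 \otimes \cR.
\]
Then $\theta_0(x) = x \otimes 1$, $\theta_t$ is an isomorphism for $t > 0$, and tensor-compatibility of the u-topology with the last line of Lemma~\ref{lem:ssa} gives $(\id_{M_0} \times \tau_\cR)\theta_t \to \id_{M_0}$ and $\theta_t^{-1} \to \id_{M_0} \times \tau_\cR$ in the u-topology as $t \to 0$. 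Second, the flip-connecting construction in the proof of Lemma~\ref{lem:ssa} supplies a u-continuous $\eta_t \in \Aut(\cR \otimes \cR)$ with $\eta_0$ the flip automorphism and $\eta_1 = \id$. Finally set
\[
\Theta_t := (\id_{M_0} \otimes \eta_t) \circ (\theta_t \otimes \id_\cR),
\]
so that a direct computation yields $\Theta_0(x_0 \otimes r) = x_0 \otimes r \otimes 1$ and $\Theta_1(x_0 \otimes r) = \theta_1(x_0) \otimes r$ under the identification $M \otimes \cR = (M_0 \otimes \cR) \otimes \cR$.

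The endpoint $H(\alpha, 1) \in \Aut(M_0) \otimes \id_\cR$ then falls out: $\Theta_1(1_{M_0} \otimes r) = \theta_1(1_{M_0}) \otimes r = 1_M \otimes r$ is fixed by $\alpha \otimes \id_\cR$ (unitality of $\alpha$), so $H(\alpha, 1)$ is the identity on $1_{M_0} \otimes \cR$, and factoriality of $\cR$ (which makes the commutant of $1_{M_0} \otimes \cR$ in $M$ equal to $M_0 \otimes 1$) forces $H(\alpha, 1) = \beta \otimes \id_\cR$ for some $\beta \in \Aut(M_0)$. Preservation of $\Int(M)$ is also immediate: if $\alpha = \Ad u$, then $H(\alpha, t) = \Ad(\Theta_t^{-1}(u \otimes 1))$ is inner for $t > 0$, and joint u-continuity of $H$ in $\alpha$ passes this to the closure.

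The hard part I expect will be verifying joint u-continuity of $H$ at $t = 0$, since $\Theta_t$ is only p-continuous and $\Theta_t^{-1}$ is not even defined at $t = 0$. The route is a Lemma~\ref{lem:p}-type argument: letting $E$ be the conditional expectation onto $\Theta_0(M) = M \otimes 1$, one shows $\Theta_t^{-1} \to \Theta_0^{-1} E = \id_M \times \tau_\cR$ and $(\id_M \times \tau_\cR) \circ \Theta_t \to \id_M$ in the u-topology, imitating the proof of Lemma~\ref{lem:p} (which really only needs $2$-norm isometry of its $\psi_n$, rather than that they are automorphisms of a single algebra). Feeding these into the defining formula, together with u-continuity of the stabilization map $\alpha \mapsto \alpha \otimes \id_\cR$ from the tensor-product compatibility of the u-topology noted in Section~\ref{sec:prelim}, yields $\vp \circ H(\alpha, t) \to \vp \circ \alpha$ in norm for every $\vp \in \ns(M)$, and joint continuity follows.
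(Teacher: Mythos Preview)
Your proposal is correct and follows the same strategy as the paper: conjugate $\alpha\otimes\id_{\cR}$ by a path $\Theta_t\colon M\to M\otimes\cR$ with $\Theta_0(y)=y\otimes 1$ and $\Theta_1(1_{M_0}\otimes\cR)\subset 1_M\otimes\cR$, and verify u-continuity at $t=0$ via $\Theta_t^{-1}\to \id_M\times\tau_{\cR}$ and $(\id_M\times\tau_{\cR})\Theta_t\to\id_M$ together with the identity $(\id_M\times\tau_{\cR})(\alpha\otimes\id_{\cR})=\alpha(\id_M\times\tau_{\cR})$. The only difference is in how $\Theta_t$ is assembled: the paper writes $M_0=M_{00}\otimes\cR_1$, $M=M_{00}\otimes\cR$ and sets $\Theta_t=\id_{M_{00}}\otimes\psi_t$ directly, using the explicit endpoint $\psi_1(x_1\otimes x_2)=\sigma_1(x_1)\otimes\sigma_2(x_2)$ of Lemma~\ref{lem:ssa} to get the $t=1$ condition for free; you instead build $\theta_t$ from $\psi_t$ via a fixed isomorphism $\rho$ and then post-compose with a flip path $\eta_t$ to move the extra $\cR$ into the correct slot. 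This costs an extra (harmless) layer in the $t\to 0$ estimate---one must split $(\id_M\times\tau_{\cR})\Theta_t$ as $[(\id_M\times\tau_{\cR})(\id_{M_0}\otimes\eta_t)]\circ[\theta_t\otimes\id_{\cR}]$ and use contractivity of $(\theta_t\otimes\id_{\cR})^*$ for the first factor and the identity $(\id_{M_0}\otimes(\tau_{\cR}\times\id_{\cR}))(\theta_t\otimes\id_{\cR})=((\id_{M_0}\times\tau_{\cR})\theta_t)\otimes\id_{\cR}$ for the second---but otherwise the argument is identical.
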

\begin{proof}
Let's write $\cR=\cQ\otimes\cS$ and 
take $\psi_t\colon\cR\to\cR\otimes\cR$ 
as in Lemma~\ref{lem:ssa}. 
In particular, 
$\psi_1(x\otimes y)=\rho_{\cQ}(x)\otimes\rho_{\cS}(y)$ 
and 
$(\id_{\cR}\times\tau_{\cR})\psi_t\to\id_{\cR}$ 
and $\psi_t^{-1}\to\id_{\cR}\times\tau_{\cR}$ 
as $t\to0$. 
We consider $M_0 \coloneqq M_{00}\otimes\cQ$ and 
$M\coloneqq M_0\otimes\cS=M_{00}\otimes\cR$. 
We define $H\colon \Aut(M)\times[0,1]\to \Aut(M)$ by 
$H(\alpha,0)=\alpha$ and 
\[
H(\alpha,t)=(\id_{M_{00}}\otimes\psi_t^{-1})(\alpha\otimes\id_{\cR})(\id_{M_{00}}\otimes\psi_t)
\] 
for $t>0$. Then $H$ is clearly continuous for $t>0$. 
If $\alpha_n\to\alpha$ and $t_n\to0$, then for every $\vp\in\ns(M)$ 
one has 
\begin{align*}
\lim_n H(\alpha_n,t_n)^*(\vp)
 &= \lim_n (\id_{M_{00}}\otimes\psi_{t_n})^*(\alpha_n\otimes\id_{\cR})^*(\id_{M_{00}}\otimes\psi_{t_n}^{-1})^*(\vp)\\
 &= \lim_n (\id_{M_{00}}\otimes\psi_{t_n})^*(\alpha\otimes\id_{\cR})^*(\id_M\times \tau_{\cR})^*(\vp)\\
 &=\lim_n (\id_{M_{00}}\otimes\psi_{t_n})^*(\id_M\times \tau_{\cR})^*\alpha^*(\vp)\\
 &=\alpha^*(\vp).
\end{align*}
Here we have used 
$\id_{M_{00}}\otimes(\id_{\cR}\times \tau_{\cR})=\id_M\times\tau_{\cR}$ and 
$(\id_M\times\tau_{\cR})(\alpha\otimes\id_{\cR})=\alpha(\id_M\times\tau_{\cR})$. 
This proves continuity of $H$. 
Finally, observe that 
\[
H(\alpha,1)=((\id_{M_{00}}\otimes\rho_{\cQ})^{-1}\alpha(\id_{M_{00}}\otimes\rho_{\cQ})) \otimes\id_{\cS}
\in\Aut(M_0)\otimes\id_{\cS}.
\]
That $H$ keeps the (approximately) inner automorphism 
group invariant is obvious. 
\end{proof}

\begin{lem}\label{lem:firstselection}
Let $M=M_0\otimes \cR$ be a strongly stable 
von Neumann algebra with separable predual. 
Let $F_0\subset\ns(M_0)$ be a finite subset and $\ve>0$. 
Put $F\coloneqq F_0\otimes\tau\subset\ns(M)$. 
Then, there is a continuous map 
$u\colon \Int(M_0)\to\cU(M)$ that satisfies
$\dist_F(\alpha\otimes\id,\Ad u(\alpha)) < \ve$ for all $\alpha$.
\end{lem}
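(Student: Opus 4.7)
The plan is to construct $u(\alpha)$ as a unitary of the form
\[
u(\alpha) = \sum_{i\in\IN} v_i \otimes p_i(\alpha),
\]
where $\{v_i\}\subset\cU(M_0)$ is a countable family of unitaries chosen once and for all, and $\{p_i(\alpha)\}\subset\cR$ is a continuously varying family of pairwise orthogonal projections summing to $1$. Since $p_ip_j=\delta_{ij}p_i$, one sees directly that $u(\alpha)^*u(\alpha)=u(\alpha)u(\alpha)^*=1$. This ansatz cleanly splits the problem into (i) choosing inner approximants $v_i$ that locally resemble $\alpha$ on $F_0$, and (ii) turning a scalar partition of unity on $\Int(M_0)$ into an operator-valued one inside $\cR$.

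The whole construction is driven by a cross-term cancellation. For $\vp\in F_0$ and $x\otimes y\in M_0\otimes\cR$, the trace property together with $p_jp_i=0$ for $i\ne j$ yields $\tau_{\cR}(p_iyp_j)=\tau_{\cR}(yp_jp_i)=\delta_{ij}\tau_{\cR}(p_iy)$, hence
\[
(\vp\otimes\tau_{\cR})(u(\alpha)(x\otimes y)u(\alpha)^*)=\sum_i \vp(\Ad v_i(x))\,\tau_{\cR}(p_i(\alpha)y),
\]
while $(\vp\otimes\tau_{\cR})((\alpha\otimes\id)(x\otimes y))=\vp(\alpha(x))\tau_{\cR}(y)=\sum_i \vp(\alpha(x))\,\tau_{\cR}(p_i(\alpha)y)$. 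Reading the difference as a sum of tensor-product functionals on $M_0\otimes\cR$ and using $\|\tau_{\cR}(p_i(\alpha)\,\cdot\,)\|=\tau_{\cR}(p_i(\alpha))$ gives the convexity estimate
\[
\dist_{\vp\otimes\tau_{\cR}}(\Ad u(\alpha),\alpha\otimes\id)\le\sum_i \tau_{\cR}(p_i(\alpha))\,\dist_\vp(\Ad v_i,\alpha).
\]
It therefore suffices to arrange $\tau_{\cR}(p_i(\alpha))=f_i(\alpha)$ for a partition of unity $\{f_i\}$ with the property that $f_i(\alpha)>0$ forces $\dist_{F_0}(\Ad v_i,\alpha)<\ve/2$; the right-hand side is then bounded by $\ve/2<\ve$ for every $\vp\in F_0$.

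For the partition of unity, use that $\Int(M_0)$ is Polish, hence paracompact and Lindel\"of: for each $\alpha_0$, approximate innerness supplies $v\in\cU(M_0)$ with $\dist_{F_0}(\alpha_0,\Ad v)<\ve/4$, which persists by continuity on an open neighborhood with $\dist_{F_0}(\alpha,\Ad v)<\ve/2$. Pass to a locally finite countable refinement $\{U_i\}_{i\in\IN}$ with corresponding unitaries $v_i$ and a subordinate partition of unity $\{f_i\}$. To realize the weights $f_i(\alpha)$ as traces of projections, fix once and for all a continuous chain $\{P_t\}_{t\in[0,1]}\subset\cR$ of projections with $\tau_{\cR}(P_t)=t$, say from any embedding $L^\infty([0,1])\hookrightarrow\cR$; one has $\|P_t-P_s\|_2=|t-s|^{1/2}$, so the chain is $2$-norm continuous. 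Set $s_i(\alpha):=\sum_{j\le i}f_j(\alpha)$ and $p_i(\alpha):=P_{s_i(\alpha)}-P_{s_{i-1}(\alpha)}$; these are pairwise orthogonal, sum to $1$, satisfy $\tau_{\cR}(p_i(\alpha))=f_i(\alpha)$, and each $p_i(\alpha)$ is continuous in $\alpha$. By local finiteness only finitely many terms of $u(\alpha)=\sum_i v_i\otimes p_i(\alpha)$ are nonzero near any given point, so the sum is locally finite and defines a continuous unitary-valued map.

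The only step with real content is the cross-term cancellation $\tau_{\cR}(p_iyp_j)=0$ for $i\ne j$: this is what converts a crude sum-of-norms estimate into the genuine convex-combination bound that lets a partition of unity succeed, and it is the convexity argument advertised in the introduction as replacing the cross-section method of Popa--Takesaki. Everything else---paracompactness of $\Int(M_0)$ for the cover, and the chain of projections in $\cR$ for the weights---is routine.
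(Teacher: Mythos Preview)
Your proof is correct and follows essentially the same route as the paper's: an open cover $\{\alpha:\dist_{F_0}(\alpha,\Ad v)<\ve\}$ of $\Int(M_0)$, a subordinate (countable, locally finite) partition of unity, realization of the weights as interval projections in $L^\infty[0,1]\subset\cR$, and the resulting convex estimate $\|(\vp\otimes\tau)\circ\Ad u(\alpha)-(\vp\circ\alpha)\otimes\tau\|\le\sum_i\tau(p_i(\alpha))\dist_\vp(\Ad v_i,\alpha)$. Your write-up is more explicit about the cross-term cancellation and about why the partition of unity exists (you invoke Polishness of $\Int(M_0)$, which tacitly uses separable predual---the paper makes the same implicit assumption by indexing its partition of unity over $\IN$), but there is no substantive difference in the argument.
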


\begin{proof}
We consider the open cover $\{ W_u : u\in\cU(M_0)\}$ for $\Int(M_0)$ given by 
\[
W_u\coloneqq\{ \alpha\in\Int(M_0) : \dist_{F_0}(\alpha,\Ad u)<\ve\}.
\]
Take a partition of unity $\{\psi_i\}_{i=1}^\infty$ for $\Int(M_0)$ 
subordinated by $\{ W_u\}$. 
For each $i$, take $u_i\in\cU(M_0)$ such that $\supp\psi_i\subset W_{u_i}$. 
We define $p_i(\alpha)$ to be the orthogonal projection 
in $L^\infty[0,1]\subset \cR$ that 
corresponds to the interval 
$[\sum_{j<i}\psi_j(\alpha), \sum_{j\le i}\psi_j(\alpha))$. 
Note that the defining sum is a locally finite sum and that 
the maps $\alpha\mapsto p_i(\alpha)$ are ultrastrongly continuous. 
We define a continuous map $u\colon \Int(M_0)\to \cU(M)$ by 
$u(\alpha) \coloneqq \sum_i u_i \otimes p_i(\alpha)$.
For every $\vp\in F_0$, one has 
\begin{align*}
(\vp \otimes \tau)\circ\Ad u(\alpha) 
 = \sum_i (\vp\circ\Ad u_i) \otimes (p_i(\alpha)\tau)
 \approx_\ve(\vp\circ\alpha)\otimes\tau
\end{align*}
since $\|\vp\circ\Ad u_i-\vp\circ\alpha\|<\ve$ for every $i$ and 
$\sum_i \|p_i(\alpha)\tau\|=\sum_i \tau(p_i(\alpha))=1$. 
\end{proof}

\begin{lem}\label{lem:bridge}
Let $M=M_0\otimes \cR$ be a strongly stable von Neumann algebra 
with separable predual. 
Let $F_0\subset \ns(M_0)$ be a finite subset and $\ve>0$. 
Put $F\coloneqq F_0\otimes\tau\subset\ns(M)$.
Then there are $\delta>0$ and a continuous map 
$h\colon \cU(M_0)\times[0,1]\to\cU(M)$ 
such that $h(u,0)=u\otimes1$ and $h(u,1)=1$ for $u\in\cU(M_0)$ 
and moreover that 
$\dist_F(\id,\Ad h(u,t))<\ve$ for all $t\in[0,1]$ 
if $u$ satisfies $\dist_{F_0}(\id, \Ad u)<\delta$. 
\end{lem}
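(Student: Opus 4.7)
The idea is to use a fixed Haar unitary $w\in\cR$ as a bridge from $u\otimes 1$ to $1$, exploiting Lemma~\ref{lem:haar} which lets us contract Haar unitaries to the identity with controlled inner-automorphism size. Two observations drive the plan. First, because $\tau$ is a trace, for every $v\in\cU(\cR)$ and every $\vp=\vp_{0}\otimes\tau\in F$ one has
\[
(\vp_{0}\otimes\tau)\circ\Ad(u\otimes v)=(\vp_{0}\circ\Ad u)\otimes\tau,
\]
so $\dist_\vp(\id,\Ad(u\otimes v))=\dist_{\vp_{0}}(\id,\Ad u)$; rotating freely in the $\cR$-factor costs nothing for the $F$-distance from the identity. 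Second, for Haar $w\in\cR$ and arbitrary $u\in\cU(M_{0})$, the product $u\otimes w$ is $\vp$-Haar for every $\vp\in F$ since $\vp((u\otimes w)^{n})=\vp_{0}(u^{n})\tau(w^{n})=0$ for $n\neq0$. So the family $\{u\otimes w:u\in\cU(M_{0})\}$ is a subset of $\vp$-Haar unitaries for each $\vp\in F$, and Lemma~\ref{lem:haar} applies to it uniformly.

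Fix a Haar unitary $w\in\cR$, let $h_{\cR}$ be the map from Lemma~\ref{lem:haar} applied inside $\cR$ to $\{w\}$, and let $h_{M}$ be the map from Lemma~\ref{lem:haar} applied inside $M$ to $\{u\otimes w:u\in\cU(M_{0})\}$. I would define $h$ piecewise by
\[
h(u,t)=\begin{cases} u\otimes h_{\cR}(w,2t), & t\in[0,\tfrac12],\\ h_{M}(u\otimes w,\,2-2t), & t\in[\tfrac12,1]. \end{cases}
\]
The two formulas agree at $t=\tfrac12$ (both give $u\otimes w$), the boundary conditions $h(u,0)=u\otimes1$ and $h(u,1)=1$ hold, and continuity in $u$ is immediate on $[0,\tfrac12]$ (since the $\cR$-factor is independent of $u$) and follows from Lemma~\ref{lem:haar} on $[\tfrac12,1]$.

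For the estimate, on the first piece the observation above gives $\dist_\vp(\id,\Ad h(u,t))=\dist_{\vp_{0}}(\id,\Ad u)<\delta$ uniformly in $t$. On the second piece, for each $\vp\in F$, the second assertion of Lemma~\ref{lem:haar} (whose $\delta$ is uniform in the Haar unitary as long as it is $\vp$-Haar) yields some $\delta_{\vp}>0$ such that $\dist_\vp(\id,\Ad(u\otimes w))<\delta_{\vp}$ forces $\dist_\vp(\id,\Ad h_{M}(u\otimes w,s))<\varepsilon$ for every $s\in[0,1]$. Since $\dist_\vp(\id,\Ad(u\otimes w))=\dist_{\vp_{0}}(\id,\Ad u)$, taking $\delta:=\min\bigl(\varepsilon,\min_{\vp\in F}\delta_{\vp}\bigr)>0$ (a finite minimum) completes the argument.

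I do not anticipate a real obstacle: the plan is just to assemble Lemma~\ref{lem:haar} with the Haar-tensor trick. The only point needing care is that the $\delta_{\vp}$ provided by Lemma~\ref{lem:haar} be uniform in $u$ for fixed $\vp$; this is precisely the content of the parenthetical remark in that lemma, which makes $\delta$ depend only on the state against which Haar-ness is measured.
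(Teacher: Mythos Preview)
Your proposal is correct and follows essentially the same route as the paper: split $[0,1]$ at $t=\tfrac12$, on the first half move from $u\otimes 1$ to $u\otimes w$ by a path in the $\cR$-slot (costless for $\dist_F$ since $\tau$ is tracial), and on the second half contract the $\vp$-Haar family $\cU(M_0)\otimes w$ using Lemma~\ref{lem:haar}. The only point the paper adds that you leave implicit is the parenthetical ``we may assume that every $\vp\in F$ is faithful'' (needed to invoke Lemma~\ref{lem:haar} as stated); this is harmless since one can always enlarge $F_0$ by a faithful state on $M_0$, and continuity of $h_M$ only requires one faithful state for which the set is Haar.
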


\begin{proof}
Fix a $\tau$-Haar unitary element $w$ in $\cR$ 
and define a continuous map 
$h\colon \cU(M_0)\times[0,1/2]\to\cU(M)$ by 
$h(u,t)\coloneqq u\otimes \exp(2t\log w)$. 
Note that $h(u,0)=u\otimes1$, 
$h(u,1/2)=u\otimes w$, and 
$\dist_F(\id,\Ad h(u,t))=\dist_{F_0}(\id,\Ad u)$ for every $u$. 
Since all elements in $\cU(M_0)\otimes w$ 
are $\vp$-Haar for $\vp\in F$, 
Lemma~\ref{lem:haar} (we may assume that 
every $\vp\in F$ is faithful) implies that the map 
$h\colon \cU(M_0)\times[1/2,1]\to\cU(M)$ defined by 
$h(u,t)=\exp( 2(1-t)\log(u\otimes w) )$ is continuous and 
satisfies 
$\dist_F(\id,\Ad h(u,t))<\ve$ 
if $\dist_{F_0}(\id, \Ad u)<\delta$.
\end{proof}

\begin{proof}[Proof of Theorem~\ref{thmA}]
Let $M_0$ be a strongly stable von Neumann algebra 
with separable predual, and write 
$M\coloneqq M_0\otimes\cR_\infty$ and 
$M_n\coloneqq M_0\otimes\cR_n$ (see Section~\ref{sec:prelim} 
for the notation). 
Since $\id_{M_0}\otimes E_n\to\id_M$ in the u-topo\-logy
by Lemma~\ref{lem:martingale}, 
we can find an increasing sequence 
$F_1\subset F_2\subset\cdots\subset \ns(M)$ 
of finite subsets such that 
$\bigcup_n F_n$ is dense in $\ns(M)$ and that 
$F_n=F_n^0\otimes\tau_{\cR_{n-1}^{\mathrm{c}}}$ 
for some $F_n^0\subset \ns(M_{n-1})$. 
We identify each $M_n$ with $M_n\otimes\IC1_{\cR_n^{\mathrm{c}}}\subset M$ 
and omit writing $\otimes 1$. 

By Lemma~\ref{lem:bridge},
for every $n\in\{0\}\cup\IN$, there are 
continuous map $h_n\colon \cU(M_{n+1})\times[0,1]\to\cU(M)$ 
and $\delta_n>0$ such that 
$h_n(v,0)=1$ and $h_n(v,1)=v$ for $v\in\cU(M_{n+1})$; 
and that if $v$ is such that $\dist_{F_n}(\id,\Ad v)<\delta_n$, 
then $\dist_{F_n}(\id,\Ad h_n(v,t))<n^{-1}$. 
The last condition is vacant for $n=0$. 
We may assume that $\delta_0=5$ and $\delta_n\searrow0$. 

We set $u_0(\alpha) \coloneqq 1$ for all $\alpha$. 
By Lemma~\ref{lem:firstselection},
for every $n\in\IN$, there is 
a continuous map $u_n\colon \Int(M_0)\to \cU(M_n)$
such that 
$\dist_{F_n}(\alpha\otimes\id,\Ad u_n(\alpha))<\delta_n/2$ for 
every $\alpha$.  

Then for every $n\in\{0\}\cup\IN$ one has 
\[
\dist_{F_n}(\id,\Ad u_{n+1}(\alpha)u_n(\alpha)^*)
 =\dist_{F_n}(\Ad u_n(\alpha),\Ad u_{n+1}(\alpha))
 <\delta_n.
\]
For $n\in\{0\}\cup\IN$ and $t\in[0,1)$, put 
$u(\alpha,n+t)\coloneqq h_n(u_{n+1}(\alpha)u_n(\alpha)^*,t)u_n(\alpha)$.
Then 
\[
\dist_{F_n}(\Ad u_n(\alpha), \Ad u(\alpha,n+t))
=\dist_{F_n}(\id, \Ad h_n(u_{n+1}(\alpha)u_n(\alpha)^*,t))<n^{-1}
\]
for all $n$ and $t$. 
Since $(\alpha,s)\mapsto u(\alpha,s)$ is continuous, so is 
$(\alpha,s)\mapsto\Ad u(\alpha,s)\in\Int(M)$. 
One has $\Ad u(\alpha,0)=\id_M$ and 
$\Ad u(\alpha,s)\to\alpha\otimes\id$ as $s\to\infty$, 
since for each $m$ 
\[
\lim_s \dist_{F_m}(\alpha\otimes\id,\Ad u(\alpha,s)) 
= \lim_n \dist_{F_m}(\alpha\otimes\id,\Ad u_n(\alpha)) = 0.
\]
By Proposition~\ref{prop:dp}, $\Int(M)$ contracts to 
$\Int(M_0)\otimes\id_{\cR_\infty}$, 
and $\Int(M_0)\otimes\id_{\cR_\infty}$ contracts 
to $\{\id_M\}$ by the above. 
\end{proof}
\begin{rem}
For the free group factor $N\coloneqq\cL F_\infty$ 
of countably infinite rank (see Section XIV.3 in \cite{takesakiIII}), 
$\Int(N)=\mathrm{Int}(N)\cong\cU(N)/\IT$ is not contractible; 
In fact, it is a model for the Eilenberg–MacLane 
space $K(\IZ,2)$, as $\cU(N)$ is contractible 
by \cite{pt}. 
Incidentally, the Polish group 
$\Out(N)\coloneqq\Aut(N)/\mathrm{Int}(N)$ 
is a model for $K(\IZ,3)$. 
That $\Aut(N)$ is contractible follows from 
Dadarlat and Pennig's argument (\cite{dp}) 
as adapted to the free product setting: 
the free flip on $N*N$ is path-connected 
to the identity automorphism 
(e.g., via Voiculescu's free gaussian functor) 
and $N\cong N^{\ast\infty}$. 
\end{rem}
%
%
%
\section{Preliminary for Proof of Theorem~\ref{thmB}}\label{sec:prelim2}
Let $H\subset G$ be topological groups (with $H$ closed) 
and $Q\colon G\to G/H$ denote the quotient map. 
A \emph{cross section} for $Q$ 
is a map $\vs\colon G/H\to G$ 
such that $Q\circ\vs=\id_{G/H}$. 
We say a cross section $\vs$ is \emph{unital} if 
it satisfies $\vs(H)=1$. 
Every cross section $\vs$ is made unital 
by multiplying $\vs(H)^{-1}$ from the right. 
A \emph{retraction} of $G$ to $H$ is a map 
$\rho\colon G\to H$ such that $\rho|_H=\id_H$. 
A retraction $\rho$ is \emph{$H$-equivariant} 
(or simply \emph{equivariant}) if 
it satisfies 
$\rho(hg)=h\rho(g)$ for all $h\in H$ and $g\in G$. 
Let's recall two facts. 
The quotient map $Q$ is open, 
because $Q^{-1}(Q(W))=\bigcup_{h\in H}Wh$ 
is open whenever $W\subset G$ is. 
The correspondence $\rho_\vs(g)\coloneqq g\vs(g^{-1}H)$ 
and $\vs_\rho(gH)\coloneqq g\rho(g^{-1})$ is a bijection 
between unital continuous cross sections $\vs$ for $Q$ and 
equivariant continuous retractions $\rho$ from $G$ onto $H$.

Recall that the CAR algebra $\CAR(\cH)$ over 
a (separable) Hilbert space $\cH$ 
is a unital $\mathrm{C}^*$-algebra together with 
a linear (and isometric) map 
$a\colon\cH\to\CAR(\cH)$ that satisfies 
the canonical anti-commutation relation: 
\begin{align*}
a(\xi)^*a(\eta)+a(\eta)a(\xi)^* &= \ip{\eta,\xi}1 \\
a(\xi)a(\eta)+a(\eta)a(\xi) &= 0.
\end{align*}
Recall that $\CAR(\cH)\cong\IM_{2^d}$ 
for $d=\dim\cH \in\IN\cup\{\infty\}$ and 
$\CAR(\cH)$ has a unique tracial state $\tau$, 
which satisfies $\tau(a(\xi)^*a(\eta))=\frac{1}{2}\ip{\eta,\xi}$. 
Recall also that any nonzero vector $\xi\in\cH$ generates 
an isomorphic copy of $\IM_2$ in $\CAR(\cH)$. 
Indeed, $v\coloneqq a(\xi)$ for a unit vector $\xi$ satisfies $v^2=0$ and 
\[
(v^*v)^2=v^*(vv^*)v=v^*(1-v^*v)v=v^*v.
\]
So, $v$ is a partial isometry such that $v^*v+vv^*=1$.
If $\xi\perp\eta$, then 
$a(\xi)$ commutes with $a(\eta)^*a(\eta)$ because 
$a(\xi)$ anti-commutes with both $a(\eta)$ and $a(\eta)^*$. 
Hence, if $\cK\subset\cH$ and $\eta\in\cK^\perp$, then 
$a(\eta)^*a(\eta)\in \CAR(\cK)'\cap\CAR(\cH)$. 

Now set $\cH\coloneqq L^2[0,1]$ and 
consider the SOT-continuous semigroup of isometries 
$(V_t)_{t\geq0}$ on $L^2[0,1]$ given by 
\[
(V_t\xi)(r)\coloneqq 1_{[0,\exp(-t)]}(r) \exp(t/2)\xi(\exp(t)r).
\]
It induces the continuous semigroup of $*$-endo\-mor\-phisms  
$\theta_t$ on $\CAR(\cH)$ by $\theta_t(a(\xi))\coloneqq a(V_t\xi)$. 
We also consider an SOT-continuous family $\{W_t\}_{t>0}$ 
of isometric embedding of $\ell_2$ into $L^2[\exp(-t),\exp(-t/2)]$. 
It induces a continuous unital embedding $\phi_t$ of 
$\CAR(\ell_2)$ into $\CAR(\cH)$. 
The above construction lifts to the completion of $\CAR(\cH)$ 
w.r.t.\ the trace $\tau$, which is isomorphic to 
the hyperfinite type $\mathrm{II}_1$ factor $\cR$. 

\begin{lem}\label{lem:car}
There is a continuous family of trace-preserving $*$-endo\-mor\-phisms 
$(\theta_t)_{t\geq0}$ on $\cR$ 
such that $\theta_0=\id$ and 
that $\theta_t(\cR)'\cap\cR$ is diffuse for $t>0$. 
Moreover, there is a continuous family 
$(\phi_t)_{t>0}$ of trace-preserving embeddings of 
$L^\infty[0,1]$ into $\cR$ such that 
$\phi_t(L^\infty[0,1])\subset\theta_t(\cR)'\cap\theta_{t/2}(\cR)$ for every $t>0$. 
\end{lem}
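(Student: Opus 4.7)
The plan is to carry out three things: (a) extend $\theta_t$ from $\CAR(\cH)$ to $\cR$ (as already announced in the setup) and verify trace-preservation and p-continuity; (b) exhibit a diffuse abelian subalgebra of $\theta_t(\cR)'\cap\cR$ generated by Fermi number operators on the complementary subspace; and (c) obtain $\phi_t$ by restricting the embedding induced by $W_t$ to the analogous abelian subalgebra inside $\CAR(\ell_2)$.

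For (a), since $\CAR(\cH)$ has a unique tracial state $\tau$, the positive functional $\tau\circ\theta_t$ is again a trace and hence equals $\tau$; so $\theta_t$ is $2$-norm isometric and extends to a trace-preserving normal $*$-endomorphism of $\cR$. For continuity in $t$, on the norm-dense $*$-subalgebra $\CAR(\cH)$ the map $t\mapsto\theta_t(x)$ is norm-continuous for each $x$ — write $x$ as a polynomial in finitely many $a(\xi_j)$ and use $\|a(V_t\xi)-a(V_s\xi)\|\le\|(V_t-V_s)\xi\|$ together with SOT-continuity of $(V_t)$. Combined with the $2$-norm equicontinuity of the family $\{\theta_t\}$, pointwise continuity on $\CAR(\cH)$ upgrades to pointwise $2$-norm continuity on $\cR$, which is the p-topology.

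For (b), pick any orthonormal basis $\{\eta_n\}_{n\geq1}$ of $\cH\ominus V_t\cH=L^2[\exp(-t),1]$. As recalled in the preamble, each $p_n:=a(\eta_n)^*a(\eta_n)$ lies in $\CAR(V_t\cH)'\cap\CAR(\cH)$, and since $\theta_t(\cR)$ is the ultrastrong closure of $\CAR(V_t\cH)=\theta_t(\CAR(\cH))$, also in $\theta_t(\cR)'\cap\cR$. A short computation from $\eta_m\perp\eta_n$ and the CAR relations shows that both $a(\eta_n)$ and $a(\eta_n)^*$ commute with $p_m$ for $m\neq n$, so the $p_n$ mutually commute. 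Each has trace $1/2$, hence $\{p_n\}_n$ generate an abelian von Neumann subalgebra isomorphic to $L^\infty(\{0,1\}^\IN,\nu^{\otimes\IN})$ with $\nu$ the fair Bernoulli measure — a diffuse copy of $L^\infty[0,1]$.

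For (c), let $\cA\subset\CAR(\ell_2)$ be the abelian subalgebra generated by the Fermi number operators $a(e_j)^*a(e_j)$ for the standard basis $\{e_j\}$; by the reasoning of (b) one has $\cA\cong L^\infty[0,1]$ as a trace-preserving inclusion, and I would take $\phi_t|_\cA$ as the desired embedding. Since $W_t\ell_2\subset L^2[\exp(-t),\exp(-t/2)]=V_{t/2}\cH\ominus V_t\cH$, the same CAR commutation calculation yields $\phi_t(\cA)\subset\CAR(V_{t/2}\cH)\cap\CAR(V_t\cH)'\subset\theta_{t/2}(\cR)\cap\theta_t(\cR)'$; continuity of $t\mapsto\phi_t$ in the p-topology is inherited from the SOT-continuity of $W_t$ exactly as in (a). The main technical obstacle I anticipate is the Fermi commutation/diffuseness computation in (b); the continuity assertions reduce to routine dense-subalgebra-plus-equicontinuity arguments.
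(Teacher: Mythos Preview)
Your proposal is correct and follows essentially the same route as the paper: the paper places the entire construction (the isometry semigroup $V_t$, the endomorphisms $\theta_t$, the embeddings $W_t$ and $\phi_t$, and the key observation that $a(\eta)^*a(\eta)\in\CAR(\cK)'$ for $\eta\perp\cK$) in the paragraph preceding the lemma and simply asserts that it lifts to $\cR$, whereas you supply the verification of trace-preservation, p-continuity, diffuseness via the commuting Bernoulli projections $a(\eta_n)^*a(\eta_n)$, and the inclusion $\phi_t(\cA)\subset\theta_{t/2}(\cR)\cap\theta_t(\cR)'$. Note that your part~(c) already implies the diffuseness assertion of part~(b), so the latter is redundant once (c) is in hand.
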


Note that there is a canonical trace-preserving isomorphism 
\[
\theta_t(\cR)\vee (\theta_t(\cR)'\cap\cR)
 \cong \theta_t(\cR)\otimes(\theta_t(\cR)'\cap\cR)
\]
since $\theta_t(\cR)$ is a type $\mathrm{II}_1$ factor. 
Lemma~\ref{lem:car} can be used to reprove 
Popa and Takesaki's theorem (\cite{pt}) that 
the unitary group of a strongly stable von Neumann 
algebra is contractible. 
It is also proved in \cite{pt} that the unitary group of 
the free group factor $\cL F_d$ of rank $d$ 
is contractible when $d=\infty$. 
Here we prove the same for all $d\geq2$. 
\begin{prop}[cf.\ Corollary $2$ in \cite{pt}]
Let $M\coloneqq M_0\otimes \cL F_d$ be the tensor product of 
a von Neumann algebra 
and the free group factor $\cL F_d$.  
Then $\cU(M)$ is contractible. 
\end{prop}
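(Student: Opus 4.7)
The strategy is to adapt the argument (sketched immediately before the proposition) by which Lemma~\ref{lem:car} reproves the Popa--Takesaki contractibility of $\cU(M)$ for strongly stable $M$. Two mechanisms carry the whole proof: Lemma~\ref{lem:car}, supplying inside a hyperfinite subfactor a continuous family of trace-preserving $*$-endomorphisms with diffuse commutants and a continuously varying supply of Haar unitaries sitting in those commutants; and Lemma~\ref{lem:haar}, which contracts a Haar unitary to the identity along the exponential path.

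First, fixing a hyperfinite subfactor $\cR\subset LF_d$ together with free Haar generators (available because $d\ge 2$), I would transport the endomorphisms $\theta_t$ from Lemma~\ref{lem:car} to a continuous family $\hat\theta_t$ of trace-preserving $*$-endomorphisms of $LF_d$, arranged so that $\hat\theta_0=\id$ and that for every $t>0$ the relative commutant $\hat\theta_t(LF_d)'\cap LF_d$ contains a diffuse abelian subalgebra $A_t$ varying continuously in $t$ (analogous to the $\phi_t(L^\infty[0,1])$ in the moreover part of Lemma~\ref{lem:car}).

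Next, tensoring by $M_0$ yields endomorphisms $\id_{M_0}\otimes\hat\theta_t$ on $M$ whose images have commutants containing $1\otimes A_t$. For $u\in\cU(M)$, I would define a contracting homotopy in the spirit of Lemma~\ref{lem:bridge}: at time $t$ pass $u$ through $\id_{M_0}\otimes\hat\theta_t$, multiply by a continuous selection of Haar unitaries $v_t\in 1\otimes A_t$ (which commute with the image and therefore produce a Haar product by factorization of the trace across the two commuting diffuse pieces), and then contract the resulting Haar unitary to $1$ via the exponential path of Lemma~\ref{lem:haar}. Continuity in $(u,s)$ then follows from the same estimates used in Lemmas~\ref{lem:bridge} and~\ref{lem:firstselection}, and concatenating with a separate contraction of the auxiliary $v_t$ finishes the homotopy to the identity.

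The main obstacle is the construction of $\hat\theta_t$ on all of $LF_d$ while keeping a diffuse relative commutant: because $LF_d$ is prime and solid, the naive free-product extension from a decomposition $LF_d=\cR\ast B$ by $\hat\theta_t=\theta_t\ast\id_B$ collapses the relative commutant to a center, since $A'\cap(A\ast B)=Z(A)$ in any free product. Overcoming this likely requires a realization of $LF_d$ intrinsically adapted to the CAR-type construction of Lemma~\ref{lem:car}, for instance via the full Fock space and a continuous semigroup of isometries on the one-particle space, so that the resulting endomorphisms inherit diffuse commutants just as the hyperfinite model does.
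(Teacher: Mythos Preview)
Your proposal has a genuine and irreparable gap, which you yourself partly diagnose but then try to talk your way around. Endomorphisms $\hat\theta_t$ of $LF_d$ whose relative commutant contains a diffuse abelian $A_t$ simply cannot exist: if $A_t\subset\hat\theta_t(LF_d)'\cap LF_d$ were diffuse, then $A_t'\cap LF_d\supseteq\hat\theta_t(LF_d)\cong LF_d$, contradicting solidity of the free group factors (the relative commutant in $LF_d$ of any diffuse subalgebra is amenable). No presentation of $LF_d$---full Fock space or otherwise---can evade this structural obstruction, so the workaround suggested in your final paragraph is a dead end rather than a technicality to be filled in.

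The paper's proof bypasses the obstruction by replacing \emph{commutation} with \emph{freeness}. Using Dykema's isomorphism $LF_2\cong\cR*\cR$, one applies the $\theta_t$ of Lemma~\ref{lem:car} separately on each free coordinate; the resulting endomorphisms $\theta_t*\theta_t$ contract $\cU(M)$ onto $\cU(M_1)$ with $M_1:=\theta_1(\cR)*\theta_1(\cR)$. The relative commutant of $M_1$ in $M$ is of course trivial, but that is not what is needed. Instead one picks a Haar unitary $w_0\in\theta_1(\cR)'\cap\cR$ in each free copy (available by Lemma~\ref{lem:car}) and sets $w:=w_1w_2$. A short free-probability computation shows that $w$ is Haar and \emph{free from} $M_1$; freeness, just like tensor-independence, forces every element of $\cU(M_1)w$ to be Haar, and Lemma~\ref{lem:haar} then contracts $\cU(M_1)w$---hence $\cU(M_1)$---to a point. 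The missing idea in your attempt is precisely this: in the free setting, a Haar unitary free from the target subalgebra plays the role that a Haar unitary in the commutant plays in the McDuff setting.
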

\begin{proof}
The proof is similar to that for Lemma~\ref{lem:bridge}.
For simplicity, we deal with the case 
$M=\cR*\cR\cong \cL F_2$ (\cite{dykema}). 
Let $\theta_t$ be as in Lemma~\ref{lem:car} and 
write $\cS\coloneqq\theta_1(\cR)$.
Take a Haar unitary element $w_0$ in $\cS'\cap\cR$.
We write $N\coloneqq\cS*\cS\subset M$. 
Then $\cU(M)$ contracts to $\cU(N)$ by Lemma~\ref{lem:car}.
Let $w_i$ denote the copy of $w_0$ 
in the $i$-th free component of $M=\cR*\cR$. 
Then $w\coloneqq w_1w_2\in\cU(M)$ is Haar. 
Moreover, since $\cU(\cS)w_0 = w_0\cU(\cS)$ 
consists of trace-zero elements, it is not hard to check 
by a direct computation that $w$ is free from $N$. 
It follows that $\cU(N)w$ consists entirely 
of Haar unitary elements. 
Hence $\cU(N)w$, and $\cU(N)$ as well, 
is contractible inside $\cU(M)$ by Lemma~\ref{lem:haar}. 
\end{proof}
%
%
%
\section{Proof of Theorem~\ref{thmB}}\label{sec:thmc}
Theorem~\ref{thmB} (in fact a slightly stronger form of it) 
is asserted in \cite{pt}. 
However, the Michael Selection Theorem (\cite{michael2}) 
is misquoted in the proof. 
Specifically, the geodesic structure given in 
Proof of Lemma~3 in p.96 in \cite{pt} does not meet 
the condition 5.1.(e) in \cite{michael2}. 
The condition 5.1.(e) roughly requires that 
if $(u_t)_{t\in[0,1]}$ and $(v_t)_{t\in[0,1]}$ are geodesic paths 
such that $\dist(u_0,v_0)<\ve$ and $\dist(u_1,v_1)<\delta\ll\ve$, 
then $\dist(u_t,v_t)<\ve$ for all $t\in[0,1]$. 
It is essential that $\ve>0$ appearing in the above 
are the same, because this condition is repeatedly used 
to form a convex structure. 
In this paper, we use more practical version of 
the Michael Selection Principle (\cite{michael1}) to 
fix this problem.

We first deal with the case where the inclusion 
$N\subset M$ of $\sigma$-finite 
von Neumann algebras is \emph{strongly stable}: 
$(N\subset M)\cong (N_0\otimes\cR\subset M_0\otimes\cR)$. 
We fix a faithful normal state 
$\omega\coloneqq\omega_0\otimes\tau$ on $M\coloneqq M_0\otimes\cR$ and 
work with the $2$-norm and the corresponding metric 
$\dist$ on $M$. 
To ease notation, write $G\coloneqq\cU(M)$, $H\coloneqq\cU(N)$, 
and $X\coloneqq H\backslash G$ with the quotient map $Q\colon G\to X$. 
Then $X$ is a (complete) metric space 
w.r.t.\ the induced metric. 
\textbf{NB!} We work with $H\backslash G$ instead of $G/H$ 
to make the following true:
\[
\dist_X(Hu,Hv)=\inf_{w,w'\in H}\|wu-w'v\|_2=\inf_{w\in H}\|u-wv\|_2=\dist_G(u, Hv).
\]

Let $(\theta_t)_{t\geq0}$ be as in Lemma~\ref{lem:car} 
and it still denote the endomorphism $\id\otimes\theta_t$ 
on $M$, which is $\omega$-preserving and hence is isometric w.r.t.\ $d$; 
and the same for $\phi_t$. 
\begin{lem}\label{lem:selection3}
Let a continuous map $\sigma\colon X\to G$ 
and $\ve>0$ be such that 
$\dist(\sigma(x),Q^{-1}(x))<\ve$ for all $x$. 
Then for every continuous function $s\colon X\to(0,\infty)$ 
and $\delta>0$, 
there is a continuous function $t\colon X\to (0,\infty)$ 
that satisfies $0<t(x)\le s(x)$ and 
\[
Q^{-1}(x)\cap\Ball(\sigma(x),\ve)
 \cap \{ u\in G : \| u - \theta_{t(x)}(u) \|_2<\delta\}
 \neq \emptyset
\]
for every $x\in X$.
\end{lem}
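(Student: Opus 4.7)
The plan is to construct $t$ locally near each $y\in X$, using a slack parameter $\kappa_y>0$ to make an entire half-open interval $(0,t_y^*]$ of admissible values work on a neighborhood $V_y$, and then to paste the local choices together by a locally finite partition of unity on the metric (hence paracompact) space $X$. The key observation that makes the gluing trivial is that a weighted average $\sum_i \psi_i(x)\,t_{y_i}^*$ automatically lies in the interval $(0,t_{y_{i^*}}^*]$ for an index $i^*$ realizing $\max\{t_{y_i}^*:\psi_i(x)>0\}$, so no cross-compatibility between the local intervals is required.

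For the local step, fix $y\in X$. The hypothesis $\dist(\sigma(y),Q^{-1}(y))<\ve$ produces some $u_y\in Q^{-1}(y)$ and a $\kappa_y>0$ with $\|u_y-\sigma(y)\|_2<\ve-2\kappa_y$. Since $\theta_0=\id$ and $t\mapsto\theta_t(u_y)$ is 2-norm continuous at $0$ by Lemma~\ref{lem:car}, I then pick $t_y^*\in(0,s(y)/2]$ with $\|u_y-\theta_t(u_y)\|_2<\delta-2\kappa_y$ for every $t\in[0,t_y^*]$. Invoking the continuity of $\sigma$ and $s$, together with the identity $\dist(u_y,Q^{-1}(x))=\dist_X(y,x)$ recalled in the text, I shrink to an open neighborhood $V_y$ of $y$ on which $\|\sigma(y)-\sigma(x)\|_2<\kappa_y/2$, $s(x)\ge t_y^*$, and for which, for every $x\in V_y$, there exists $u_x\in Q^{-1}(x)$ with $\|u_x-u_y\|_2<\kappa_y/2$. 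Two triangle inequalities, using that $\id\otimes\theta_t$ is $\omega$-preserving and hence 2-norm isometric, then yield $\|u_x-\sigma(x)\|_2<\ve-\kappa_y$ and $\|u_x-\theta_t(u_x)\|_2<\delta-\kappa_y$ for every $t\in(0,t_y^*]$. Thus the set $Q^{-1}(x)\cap\Ball(\sigma(x),\ve)\cap\{u:\|u-\theta_t(u)\|_2<\delta\}$ is nonempty for all $(x,t)\in V_y\times(0,t_y^*]$.

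For the global step, since $X$ is metrizable and therefore paracompact, I choose a locally finite partition of unity $\{\psi_i\}$ subordinate to $\{V_{y_i}\}_i$ and set $t(x):=\sum_i\psi_i(x)\,t_{y_i}^*$. Local finiteness makes $t$ continuous, and positivity of the $t_{y_i}^*$ together with $\sum_i\psi_i(x)=1$ makes $t(x)>0$. For each $x$, picking any index $i^*$ that realizes $\max\{t_{y_i}^*:\psi_i(x)>0\}$ gives $x\in V_{y_{i^*}}$ and $t(x)\le t_{y_{i^*}}^*\le s(x)$, so the local step at $y_{i^*}$ applies to the pair $(x,t(x))$ and delivers the required nonempty intersection. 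The main obstacle, and the step that requires care, is the local construction: one must arrange the slack $\kappa_y$ so that an entire interval of admissible $t$-values (not merely one $t_y$) is available on each $V_y$, because otherwise the convex combination produced by the partition of unity could miss every single admissible value at overlap points. Once this slack is built in, gluing is essentially automatic.
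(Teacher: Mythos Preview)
Your proof is correct and follows essentially the same strategy as the paper: cover $X$ by open sets on which an entire interval $(0,t_y^*]$ of $t$-values works (the paper packages this via $f_t(u):=\max_{s\in[0,t]}\|u-\theta_s(u)\|_2$ and the sets $W_t$, you do it by hand with slack parameters $\kappa_y$), then glue by a partition of unity and observe that the convex combination is dominated by the maximal local threshold. The only cosmetic difference is that the paper enforces $t(x)\le s(x)$ by taking $\bigl(\sum_i\psi_i(x)t_i\bigr)\wedge s(x)$ at the end, whereas you build $s(x)\ge t_y^*$ into the choice of $V_y$.
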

\begin{proof}
For each $t>0$, put $f_t(u)\coloneqq\max_{s\in[0,t]}\| u - \theta_s(u)\|_2$. 
Note that each $f_t$ is continuous on $G$ and that 
$f_t\searrow0$ pointwise on $G$ as $t\searrow0$.  
For each $t>0$, consider
\[
W_t\coloneqq\{ x\in X : Q^{-1}(x)\cap\Ball(\sigma(x),\ve)\cap\{ u\in G : f_t(u)<\delta\}
 \neq \emptyset \}.
\]

We claim that $\{ W_t \}_{t>0}$ is an open cover for $X$. 
That it covers $X$ is straightforward. 
To prove $W_t$ is open, let $x_0\in W_t$ be given. 
Then there is $\kappa\in (0,\ve)$ such that 
\[
Q^{-1}(x_0)\cap\Ball(\sigma(x_0),\ve-\kappa)\cap\{ u\in G : f_t(u)<\delta\}
 \neq \emptyset
\]
Hence
\[
Q(\Ball(\sigma(x_0),\ve-\kappa)\cap\{ u\in G : f_t(u)<\delta\})\cap\{ x\in X : \|\sigma(x_0)-\sigma(x)\|_2<\kappa\}
\]
is an open neighborhood of $x_0$ that is contained in $W_t$. 

Thus, there is a partition of unity $\{\psi_i\}$ for $X$ 
subordinated by $\{ W_t : t>0\}$.
For each $i$, take $t_i$ such that $\supp\psi_i\subset W_{t_i}$. 
Then the continuous function $t(x)\coloneqq(\sum_i \psi_i(x)t_i)\wedge s(x)$ satisfies the desired property. 
Indeed, for $r(x)\coloneqq\max\{ t_i : \psi_i(x)\neq0\}$ 
one has $t(x) \le r(x)$ and $x\in W_{r(x)}\subset W_{t(x)}$. 
\end{proof}

Here is the Michael Selection Principle (\cite{michael1}) as adapted to our setting. 
\begin{lem}\label{lem:pre}
Theorem~\ref{thmB} holds true if the inclusion $N\subset M$ 
is strongly stable.
\end{lem}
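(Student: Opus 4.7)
My plan is to iteratively build a Cauchy sequence of continuous maps $\sigma_n\colon X\to G$ satisfying $\dist(\sigma_n(x),Q^{-1}(x))<\ve_n$ for some $\ve_n\to 0$ and with $\|\sigma_{n+1}(x)-\sigma_n(x)\|_2$ summable in $n$. Since $G$ is complete in the $2$-norm and $Q$ is continuous, the uniform limit $\sigma:=\lim_n\sigma_n$ is a continuous cross section for $Q$. I start with $\sigma_0\equiv 1$ and an initial bound $\ve_0=2$.

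At stage $n$, I fix a small tolerance $\delta_n$ with $\delta_n\ll\ve_{n+1}:=\ve_n/2$ and apply Lemma~\ref{lem:selection3} (with $s\equiv 1$) to obtain a continuous $t_n\colon X\to(0,\infty)$ such that the set-valued map
\[
\Phi_n(x):=Q^{-1}(x)\cap\Ball(\sigma_n(x),\ve_n)\cap\{u\in G:\|u-\theta_{t_n(x)}(u)\|_2<\delta_n\}
\]
has nonempty values. The map $\Phi_n$ is lower semicontinuous, since $x\mapsto Q^{-1}(x)$ is lsc ($Q$ is open), $\Ball(\sigma_n(x),\ve_n)$ varies continuously, and the third factor depends lsc on the parameter $t_n(x)$.

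Next I would apply the practical form of the Michael Selection Principle \cite{michael1} to extract a continuous $\sigma_{n+1}\colon X\to G$ with $\sigma_{n+1}(x)\in\overline{\Phi_n(x)}$. Since the fibers $Q^{-1}(x)$ are unitary orbits and not convex, the convex structure required by Michael's theorem is supplied, in the spirit of Lemma~\ref{lem:firstselection}, by the diffuse commutant of Lemma~\ref{lem:car}: cover $X$ by opens $W_i$ admitting local continuous selections $v_i\colon W_i\to G$ of $\Phi_n$, take a subordinate partition of unity $\{\psi_i\}$, and choose continuous pairwise orthogonal projections $p_i(x)\in\phi_{t_n(x)}(L^\infty[0,1])$ summing to $1$ with $\tau(p_i(x))=\psi_i(x)$. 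Setting $\sigma_{n+1}(x):=\sum_i v_i(x)\,p_i(x)$, the approximate $\theta_{t_n(x)}$-fixedness of each $v_i(x)$ combined with $p_j(x)\in\phi_{t_n(x)}(L^\infty[0,1])\subset\theta_{t_n(x)}(M)'\cap M$ ensures that the $v_i$'s approximately commute with the $p_j$'s; so $\sigma_{n+1}(x)$ is an approximate unitary within $O(\delta_n)$ of a true element of $\cU(M)$, lies within $O(\delta_n)$ of $Q^{-1}(x)$ (because $\phi_t(L^\infty[0,1])\subset\cR\subset N$, so the partition lies in $H$), and sits within $\ve_n$ of $\sigma_n(x)$. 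Taking $\delta_n$ small enough at the start makes $\ve_{n+1}$ the new tolerance and the iteration converges.

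The main obstacle will be the gluing estimate, namely uniformly bounding $\dist(\sigma_{n+1}(x),Q^{-1}(x))$. A priori, right multiplication of a local lift $v_i(x)\in Q^{-1}(x)$ by a projection $p_j(x)\in N$ does not preserve the coset $Hu$; one recovers preservation only up to the commutator $[v_i(x),p_j(x)]$. The whole construction therefore hinges on controlling this commutator by $\delta_n$, which is precisely what the approximate $\theta_{t_n(x)}$-fixedness produced by Lemma~\ref{lem:selection3}, together with the nested inclusion $\phi_t(L^\infty[0,1])\subset\theta_t(\cR)'\cap\theta_{t/2}(\cR)$ of Lemma~\ref{lem:car}, is arranged to supply.
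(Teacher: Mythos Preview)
Your overall strategy---iterate a partition-of-unity gluing using the diffuse relative commutant from Lemma~\ref{lem:car} to produce a Cauchy sequence of approximate sections---is exactly the paper's, but the estimates you need do not follow from approximate $\theta_t$-fixedness alone. The crucial inequality is
\[
\Bigl\|\sum_i a_i\,p_i\Bigr\|_2 \le \max_i \|a_i\|_2
\quad\text{for orthogonal projections }p_i\in\phi_{t}(L^\infty[0,1])\text{ with }\sum_i p_i=1,
\]
and this holds only when each $a_i$ lies in $\theta_t(M)$, so that $a_i$ and $p_j$ are in genuine tensor position (via $\theta_t(\cR)\vee(\theta_t(\cR)'\cap\cR)\cong\theta_t(\cR)\otimes(\theta_t(\cR)'\cap\cR)$). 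With only $\|v_i-\theta_t(v_i)\|_2<\delta_n$ you get $\|\sum_i a_ip_i\|_2^2=\sum_i\omega(a_i^*a_ip_i)\le\sum_{i\in I(x)}\|a_i\|_2^2$, which picks up the (uncontrolled) cardinality of $I(x)=\{i:\psi_i(x)>0\}$; this spoils both the distance-to-fiber bound and the Cauchy bound. Worse, your $\sigma_n(x)$ has no reason to commute with the projections $p_i(x)$ chosen at stage $n$, so even the heuristic ``each $v_i$ is $\ve_n$-close to $\sigma_n$, hence so is the combination'' fails. (There is also a circularity: you assume local continuous selections $v_i$ of the lsc map $\Phi_n$, but producing such selections without convexity is precisely the problem.)

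The paper closes these gaps with two devices you omit. First, it glues \emph{constant} unitaries $u_i$ (one per member of the cover) after pushing them into the commutant range: $\vs_n(x)=\sum_i\theta_{t(x)}(u_i)\,\phi_{t(x)}(p_i(x))$. Now the summands commute exactly with the projections, $\vs_n(x)$ is an honest unitary, and moreover $\vs_n(x)\in\ran\theta_{t(x)/2}$. Second---and this is why Lemma~\ref{lem:selection3} carries the parameter $s$, not $s\equiv1$---one takes $t(x)\le s_{n-1}(x)$, so that the previously built $\vs_{n-1}(x)\in\ran\theta_{s_{n-1}(x)}\subset\ran\theta_{t(x)}$ also commutes with the new projections; this is what yields the clean bound $\|\vs_n(x)-\vs_{n-1}(x)\|_2\le\max_{i\in I(x)}\|\vs_{n-1}(x)-\theta_{t(x)}(u_i)\|_2$. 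Setting $s_n(x)=t(x)/2$ propagates the invariant. Your choice $s\equiv1$ discards this nesting, and your use of variable selections $v_i(x)$ in place of $\theta_{t(x)}(u_i)$ forfeits the exact commutation that makes the averaging work.
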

\begin{proof} 
We stick to the notation and 
work with the $2$-norm 
corresponding to a faithful normal state 
$\omega=\omega_0\otimes\tau$ on $M=M_0\otimes\cR$ 
and the quotient map $Q\colon G\to X\coloneqq H\backslash G$.  
Put $\delta_n\coloneqq 4\cdot 2^{-n}$. 
We will construct continuous maps $\vs_n\colon X\to G$ 
and $s_n\colon X\to(0,\infty)$ such that 
\begin{itemize}
\item
$\vs_n(x)\in\ran\theta_{s_n(x)}$ for every $x\in X$;
\item
$\dist(\vs_n(x),Q^{-1}(x)) < \delta_n$ for every $x\in X$;
\item
$\dist(\vs_{n-1}(x),\vs_{n}(x)) < \delta_{n-1}+\delta_n$ for every $x\in X$.
\end{itemize}
Once this is done, $\vs(x) \coloneqq \lim_n \vs_n(x)$ defines 
a continuous cross section for $Q$.

Put $\vs_0(x)\coloneqq 1$ and $s_0(x)\coloneqq 1$ for all $x\in X$. 
Fix $n$ and suppose that $\vs_{n-1}$ and $s_{n-1}$ are already constructed. 
Put $\gamma\coloneqq 1/4$ and take $t\colon X\to (0,\infty)$ 
as in Lemma~\ref{lem:selection3} for 
$\sigma=\vs_{n-1}$, $\ve=\delta_{n-1}$, $s(x)=s_{n-1}(x)$, and $\delta=\gamma\delta_n$. 
For every $u\in G$, consider 
\begin{align*}
W_u\coloneqq\{ x \in X : &\ Q^{-1}(x) \cap \Ball(\vs_{n-1}(x), \delta_{n-1}) 
 \cap \Ball(u, \gamma\delta_n) \neq\emptyset\\
 &\mbox{ and } \| u - \theta_{t(x)}(u)\|_2 < \gamma\delta_n\}.
\end{align*}

We claim that $\{W_u\}_{u \in G}$ is an open cover for $X$. 
That it covers $X$ is straightforward. 
To prove $W_u$ is open, let $x_0\in W_u$ be given. 
Then there is $\kappa\in(0,\gamma\delta_n)$ such that 
\[
Q^{-1}(x_0) \cap \Ball(\vs_{n-1}(x_0), \delta_{n-1}-\kappa) 
 \cap \Ball(u, \gamma\delta_n) \neq \emptyset
\]
and $\| u - \theta_{t(x_0)}(u)\|_2 < \gamma\delta_n-\kappa$. 
Hence for 
\begin{align*}
&Q(\Ball(\vs_{n-1}(x_0), \delta_{n-1}-\kappa)
 \cap \Ball(u, \gamma\delta_n))\\
 &\quad\cap \{ x\in X : 
  \|\vs_{n-1}(x_0)-\vs_{n-1}(x)\|_2 + \| \theta_{t(x_0)}(u) - \theta_{t(x)}(u)\|_2<\kappa\}
\end{align*}
is an open neighborhood of $x_0$ that is contained in $W_u$. 

Thus, there is a partition of unity $\{\psi_i\}_{i\in I}$ for $X$ 
subordinated by $\{ W_u\}$. 
We may assume that $I$ is totally ordered. 
For each $i$, take $u_i$ such that $\supp\psi_i\subset W_{u_i}$. 
We define $p_i(x)$ to be the orthogonal projection in $L^\infty[0,1]$ that 
corresponds to the interval $[\sum_{j<i}\psi_j(x), \sum_{j\le i}\psi_j(x))$. 
Note that the defining sum is a locally finite sum and that 
the maps $x\mapsto p_i(x)$ are ultrastrongly continuous. 
We put $s_n(x)\coloneqq t(x)/2$ and define 
a continuous map $\vs_n\colon X\to G$ by 
\[
\vs_n(x) \coloneqq \sum_i \theta_{t(x)}(u_i)\phi_{t(x)}(p_i(x)) \in \ran\theta_{s_n(x)}.
\]
Recall that $\phi_t$ is a continuous family of 
trace-preserving embeddings of $L^\infty[0,1]$ 
into $\theta_t(\cR)'\cap\theta_{t/2}(\cR)$ 
(as embedded in $M$). 
To prove $\vs_n$ satisfies the displayed conditions, 
let $x\in X$ be given and write $I(x)\coloneqq\{ i : \psi_i(x)>0\}$. 
For each $i\in I(x)$, take 
\[
v_i \in Q^{-1}(x) \cap \Ball(\vs_{n-1}(x), \delta_{n-1}) \cap \Ball(u_i, \gamma\delta_n) 
\]
and put $v\coloneqq\sum_i \theta_{t(x)}(v_i)\phi_{t(x)}(p_i(x))$. 
Then, 
\[
\| v- \vs_n(x) \|_2\le \max\{\| v_i-u_i\|_2 : i\in I(x)\}< \gamma\delta_n.
\]
Moreover, for a fixed $i_0\in I(x)$, one has  
$v_iv_{i_0}^{-1} \in H$ for all $i$ and 
hence $v\theta_{t(x)}(v_{i_0})^{-1}\in H$, because 
$\theta_t$ and $\phi_t$ keep $N$ invariant. 
Recall that $\| u_{i_0} - \theta_{t(x)}(u_{i_0})\|_2 < \gamma\delta_n$ because 
$x\in W_{i_0}$.
Thus, 
\[
\dist( \vs_n(x), Q^{-1}(x) ) \le \| \vs_n(x) - v \|_2 + \| \theta_{t(x)}(v_{i_0})-v_{i_0}\|_2
\le 4\cdot \gamma\delta_n=\delta_n.
\]
Also, since 
$\|  \vs_{n-1}(x) - u_i\|_2 < \delta_{n-1}+\gamma\delta_n$ and 
$\vs_{n-1}(x) \in \ran\theta_{s_{n-1}(x)}\subset \ran\theta_{t(x)}$ 
commutes with $\phi_{t(x)}(p_i(x))$ for all $i\in I(x)$,
one has 
\[
\|\vs_{n-1}(x)-\vs_n(x)\|_2<\max\{ \| \vs_{n-1}(x) - \theta_{t(x)}(u_i) \|_2 : i\in I(x) \}
 < \delta_{n-1}+2\cdot\gamma\delta_n.
\]
These altogether verify the required properties for $\vs_n$ and $s_n$. 
\end{proof}

\begin{proof}[Proof of Theorem~\ref{thmB}]
Let $N\subset M$ be $\sigma$-finite von Neumann algebras 
and now only assume that $N$ is strongly stable. 
As discussed in Section~\ref{sec:prelim2}, 
it suffices to prove existence of a $\cU(N)$-equivariant 
continuous retraction from $\cU(M)$ onto $\cU(N)$. 
Let $\cS$ denote a copy of $\cR$.
Since $N=N_0\otimes\cR$, we see that 
the inclusion 
\[
(N \otimes \IC1 \subset N\otimes\cS)
 \cong (N_0\otimes\IC1\otimes\cR\subset N_0\otimes\cS\otimes\cR)
\]
is strongly stable. 
Hence by Lemma~\ref{lem:pre}, $\cU(N)=\cU(N \otimes \IC1)$ 
is an equivariant retract of $\cU(N\otimes\cS)$. 
Since $\cU(N\otimes\cS)$ is an equivariant retract 
of $\cU(M\otimes\cS)$ by Lemma~\ref{lem:pre} again, 
we find a $\cU(N)$-equivariant retraction
\[
\cU(M) \hookrightarrow \cU(M\otimes\cS) \stackrel{\mathrm{retract}}{\to} \cU(N\otimes\cS)\stackrel{\mathrm{retract}}{\to}\cU(N).
\]
\vspace*{-1cm}

\end{proof}

\end{document}